\begin{document}
\input{amssym.def}
\input{xy}

\xyoption{all}
\theoremstyle{remark}
\theoremstyle{plain}
\newtheorem{exam}{Example}
\pagestyle{plain}
\newtheorem*{theorem*}{Theorem}
\setcounter{tocdepth}{1}
\numberwithin{equation}{subsection} 
\newtheorem{guess}{theorem}[section]
\newtheorem{thm}[guess]{Theorem}
\newtheorem{lem}[guess]{Lemma}
\newtheorem{prop}[guess]{Proposition}
\newtheorem{Cor}[guess]{Corollary}
\newtheorem{defi}[guess]{Definition}
\newcommand{\ds}{\displaystyle}
\theoremstyle{definition}
\newtheorem{rem}[guess]{Remark}

\newtheorem{ex}[guess]{Example}
\newtheorem{prob}[guess]{Problem}
\newtheorem{claim}[guess]{Claim}

\newcommand{\Hfr}{\mathfrak{H}}

\newcommand{\fX}{\mathfrak{X}}
\newcommand{\gfr}{\mathfrak{g}}
\newcommand{\hfr}{\mathfrak{h}}
\newcommand{\Dfr}{\mathfrak{D}}
\newcommand{\Efr}{\mathfrak{E}}
\newcommand{\Gfr}{\mathfrak{G}}
\newcommand{\Ffr}{\mathfrak{F}}
\newcommand{\Cfr}{\mathfrak{C}}
\newcommand{\rfr}{\mathfrak{r}}
\newcommand{\lfr}{\mathfrak{l}}
\newcommand{\sfr}{\mathfrak{s}}
\newcommand{\pfr}{\mathfrak{p}}
\newcommand{\Sfr}{\mathfrak{S}}

\newcommand{\cY}{\mathcal{Y}}

\newcommand{\cV}{\mathcal{V}}
\newcommand{\cZ}{\mathcal{Z}}
\newcommand{\cU}{\mathcal{U}}
\newcommand{\cI}{\mathcal{I}}
\newcommand{\cK}{\mathcal{K}}
\newcommand{\cD}{\mathcal{D}}
\newcommand{\cQ}{\mathcal{Q}}
\newcommand{\cO}{\mathcal{O}}
\newcommand{\cC}{\mathcal{C}}
\newcommand{\cE}{\mathcal{E}}
\newcommand{\cG}{\mathcal{G}}
\newcommand{\cW}{\mathcal{W}}
\newcommand{\cB}{\mathcal{B}}
\newcommand{\cF}{\mathcal{F}}
\newcommand{\cH}{\mathcal{H}}
\newcommand{\cM}{\mathcal{M}}
\newcommand{\cA}{\mathcal{A}}
\newcommand{\cN}{\mathcal{N}}
\newcommand{\cR}{\mathcal{R}}
\newcommand{\cL}{\mathcal{L}}
\newcommand{\cT}{\mathcal{T}}
\newcommand{\cS}{\mathcal{S}}
\newcommand{\cX}{\mathcal{X}}
\newcommand{\cP}{\mathcal{P}}
\newcommand{\DD}{\mathcal{D}}
\newcommand{\KK}{\mathcal{K}}
\newcommand{\bs}{\mathbf{s}}
\newcommand{\bb}{\mathbf{b}}
\newcommand{\mf}{\mathfrak{f}}

\newcommand{\Prym}{\mathrm{Prym}}
\newcommand{\Nbd}{\mathrm{Nbd}}
\newcommand{\Frac}{\mathrm{Frac}}
\newcommand{\AbShv}{\mathrm{AbShv}}
\newcommand{\Shv}{\mathrm{Shv}}
\newcommand{\PreShv}{\mathrm{Preshv}}
\newcommand{\Sets}{\mathrm{Sets}}
\newcommand{\topgp}{\mathrm{top}}
\newcommand{\sub}{\mathrm{sub}}
\newcommand{\Sym}{\mathrm{Sym}}
\newcommand{\disc}{\mathrm{disc}}
\newcommand{\Pic}{\mathrm{Pic}}
\newcommand{\Gal}{\mathrm{Gal}}
\newcommand{\Jac}{\mathrm{Jac}}
\newcommand{\Obj}{\mathrm{Obj}}
\newcommand{\Stab}{\mathrm{Stab}}
\newcommand{\Div}{\mathrm{Div}}
\newcommand{\Grass}{\mathrm{Grass}}
\newcommand{\height}{\mathrm{height}}
\newcommand{\Bl}{\mathrm{Bl}}
\newcommand{\Ad}{\mathrm{Ad}}
\newcommand{\Inn}{\mathrm{Inn}}
\newcommand{\Out}{\mathrm{Out}}

\newcommand{\Ram}{\mathrm{Ram}}
\newcommand{\diag}{\mathrm{diag}}
\newcommand{\ram}{\mathrm{ram}}
\newcommand{\Mor}{\mathrm{Mor}}
\newcommand{\Img}{\mathrm{Img}}
\newcommand{\Spec}{\mathrm{Spec}}
\newcommand{\zero}{\mathrm{zero}}
\newcommand{\SU}{\mathrm{SU}}
\newcommand{\Ker}{\mathrm{Ker}}
\newcommand{\Trace}{\mathrm{Trace}}
\newcommand{\ad}{\mathrm{ad}}
\newcommand{\stdn}{\mathrm{stdn}}
\newcommand{\tr}{\mathrm{tr}}
\newcommand{\triv}{\mathrm{triv}}
\newcommand{\sgn}{\mathrm{sgn}}
\newcommand{\ev}{\mathrm{ev}}
\newcommand{\Tor}{\mathrm{Tor}}
\newcommand{\opp}{\mathrm{opp}}
\newcommand{\Cov}{\mathrm{Cov}}
\newcommand{\Covet}{\mathrm{Cov_{\acute{e}t}}}
\newcommand{\ob}{\mathrm{ob}}
\newcommand{\AbGps}{\mathrm{AbGps}}
\newcommand{\rank}{\mathrm{rank}}
\newcommand{\supp}{\mathrm{supp}}
\newcommand{\glue}{\mathrm{\tt glue}}

\newcommand{\lra}{\longrightarrow}
\newcommand{\Lra}{\Leftrightarrow}
\newcommand{\Ra}{\Rightarrow}
\newcommand{\hra}{\hookrightarrow}
\newcommand{\sr}{\stackrel}
\newcommand{\dra}{\dashrightarrow}
\newcommand{\ra}{\rightarrow}
\newcommand{\ol}{\overline}
\newcommand{\wh}{\widehat}
\newcommand{\loms}{\longmapsto}
\newcommand{\la}{\leftarrow}
\newcommand{\lems}{\leftmapsto}
\newcommand{\vp}{\varpi}
\newcommand{\ep}{\epsilon}
\newcommand{\La}{\Lambda}
\newcommand{\abf}{\bigtriangleup_\theta}
\newcommand{\ms}{\mapsto}
\newcommand{\evt}{\tilde{\ev}}
\newcommand{\ul}{\underline}
\newcommand{\uT}{\underline{T}}
\newcommand{\cech}{\cC^.(\fU, \uT)}
\newcommand{\bc}{{\mathbb C}}
\newcommand{\tcr}{\text{\cursive r}}
\newcommand{\bp}{{\mathbb P}}
\newcommand{\bz}{{\mathbb Z}}
\newcommand{\bq}{{\mathbb Q}}
\newcommand{\bn}{{\mathbb N}}
\newcommand{\bg}{{\mathbb G}}
\newcommand{\br}{{\mathbb R}}
\newcommand{{\bh}}{{\mathbb H}}


\newcommand{\vf}{\varphi}

\newcommand{\vt}{\vartheta}

\newcommand{\TT}{\Theta}
\newcommand{\spec}{{\rm Spec}\,}

\newcommand{\gt}{\theta}
\newcommand{\RR}{\mathbb{R}}
\newcommand{\EE}{\mathbb{E}}
\newcommand{\WW}{\mathbb{W}}
\newcommand{\FF}{\mathbb{F}}
\newcommand{\VV}{\mathbb{V}}
\newcommand{\qq}{\mathbb{q}}
\newcommand{\HH}{\mathbb{H}}
\newcommand{\PP}{\mathbb{P}}
\newcommand{\Sbb}{\mathbb{S}}
\newcommand{\ZZ}{\mathbb{Z}}
\newcommand{\GG}{\mathbb{G}}
\newcommand{\QQ}{\mathbb{Q}}
\newcommand{\NN}{\mathbb{N}}
\newcommand{\UU}{\mathbb{U}}
\newcommand{\XX}{\mathbb{X}}
\newcommand{\CC}{\mathbb{C}}
\newcommand{\bX}{\mathbb{X}}
\newcommand{\bY}{\mathbb{Y}}
\newcommand{\fU}{\mathfrak{U}}

\newcommand{\tcm}{\text{\cursive c}}
\newcommand{\tcs}{\small\text{\cursive s}}
\newcommand{\tcq}{\small\text{\cursive q}}
\newcommand{\tcf}{\tiny\text{\cursive f}}
\newcommand{\tcb}{\it b}
\newcommand{\tcge}{\small\text{\cursive ge}}


\newcommand{\sma}[1]{ \begin{pmatrix}

\begin{smallmatrix} #1 \end{smallmatrix} \end{pmatrix} }

\newcommand{\sm}[1]{

\begin{smallmatrix} #1 \end{smallmatrix}  }

\newcommand{\symp}[1]{\mathrm{Sp}_{#1}(\ZZ)}
\renewcommand\qedsymbol{\tt{Q.E.D}}

\newcommand{\Ext}{\mathrm{Ext}}
\newcommand{\std}{\mathrm{std}}
\newcommand{\End}{\mathrm{End}}
\newcommand{\Mum}{\mathrm{Mum}}
\newcommand{\Hom}{\mathrm{Hom}}
\newcommand{\Ind}{\mathrm{Ind}}
\newcommand{\Id}{\mathrm{Id}}
\newcommand{\dimn}{\mathrm{\text{dim}}}
\newcommand{\pardeg}{\mathrm{pardeg}}
\newcommand{\x}{\mathrm{x}}
\newcommand{\Res}{\mathrm{Res}}
\newcommand{\Endq}{\mathrm{End}_\QQ}
\newcommand{\GL}{\mathrm{GL}}
\newcommand{\SL}{\mathrm{SL}}
\newcommand{\Cl}{\mathrm{Cl}}
\newcommand{\SO}{\mathrm{SO}}
\newcommand{\Sp}{\mathrm{Sp}}
\newcommand{\Spin}{\mathrm{Spin}}
\newcommand{\PSO}{\mathrm{PSO}}
\newcommand{\PGL}{\mathrm{PGL}}
\newcommand{\Cb}{\mathrm{Cb}}
\newcommand{\Cg}{\mathrm{Cg}}
\newcommand{\Nm}{\mathrm{Nm}}
\newcommand{\Proj}{\mathrm{Proj}}
\newcommand{\im}{\mathrm{im}}
\newcommand{\Aut}{\mathrm{Aut}}
\newcommand{\coker}{\mathrm{coker}}
\def\map#1{\ \smash{\mathop{\longrightarrow}\limits^{#1}}\ }


\newcommand{\Pdon}{\Prym(\pi,\Lambda)}
\newcommand{\twinv}{{H^1(Z,\underline{T})}^W}
\newcommand{\He}{H_{\acute{e}t}}
\newcommand{\Shalf}{S_{1/2}}

\title{On a theorem of Narasimhan and Ramanan on deformations}

\author[V. Balaji]{Vikraman Balaji}
\address{Chennai Mathematical Institute }
\email{balaji@cmi.ac.in}

\author[Y. Pandey]{Yashonidhi Pandey}
\thanks{The research for this paper was partially funded by the SERB Core Research Grant CRG/2022/000051.}
\address{ 
Indian Institute of Science Education and Research, Mohali Knowledge city, Sector 81, SAS Nagar, Manauli PO 140306, India}
\email{ ypandey@iisermohali.ac.in, yashonidhipandey@yahoo.co.uk}

\begin{abstract}
Let $X$ be a smooth projective curve genus $G$ (as elaborated in \ref{main1}), over an algebraically closed field $k$ of arbitrary characteristics. Let $\cH$ {\em be a tamely ramified  absolutely simple, simply connected connected group scheme (see \eqref{quasisplitcase})}. Let  $\cM$ denote the moduli stack $\cM_X(\cH)$ of $\cH$-torsors on $X$ and $\cM^{^s}$ be the open substack of {\em stable torsors}. Using the theory of parahoric torsors and Parahoric-correspondences, we describe the cohomology groups $\text{H}^i\left(\cM^{^s}, \cT_{_{\cM}}\right), i = 0,1,2$ and $\text{H}^i\left(\cM^{^s}, \Omega_{_{\cM}}\right),  i = 0,1,2$ in terms of the curve $X$. The classical results of Narasimhan and Ramanan are derived as a consequence. 

\end{abstract}
\subjclass[2000]{14D23,14D20}
\keywords{Parahoric bundles, Moduli stack, Deformations}
\maketitle

\small
\tableofcontents
\setcounter{tocdepth}{2} 
\normalsize

\section{Introduction}

Let $k$ be an algebraically closed field of arbitrary characteristics,  and let $G$ be an {\em almost simple,  simply-connected} connected group scheme over $k$. Let $X$ be a smooth projective curve over $k$ of genus $g$ with bounds as elaborated in \ref{main1}. Let $\cM$ denote the moduli stack $\cM_X(G)$ of principal $G$-bundles on $X$, also called $\text{Bun}_X(G)$ in the literature. It is known that $\cM$ is a smooth algebraic $k$-stack. 

Let $E \to X \times \cM$ denote the universal $G$-torsor and let $\gfr := \text{Lie}(G)$. For $t$ in the open substack $\cM^{^s}$ of stable torsors,  it is known that the cohomology group $\text{H}^1 \left(X, E_t \left(\gfr \right) \right)$ controls the deformations of the $G$-torsor $E_t$ and is in fact the tangent space to $\cM^{^s}$ at $E_t$. Let $q:X \times \cM^{^s} \to \cM^{^s}$ denote the projection. Let us define the bundle 
\[
\cT_{_{\cM}} := R^1q_*E \left(\gfr \right)
\]
on the stack $\cM^{^s}$ and $\Omega_{_{\cM}}$ as its dual. 

 The aim of our article is to describe cohomology groups $\text{H}^i \left(\cM^{^s}, \cT_{_{\cM}} \right)$ (see \eqref{thedefnos}), and $\text{H}^i \left(\cM^{^s}, \Omega_{_{\cM}} \right)$ (see \eqref{thedefnos1}),  and derive consequences on deformations of moduli spaces of torsors such as the classical theorems of Narasimhan and Ramanan \cite{nr} in positive characteristics under a mild restriction on $p$ (see \ref{main1}).  In \cite{balaji-vishwanath}, these computations were carried out for the smooth compactification of the moduli space of stable $\text{SL}(2)$-bundles. In \cite{Hitchin1987}, Hitchin gave a new proof of this result in the case of vector bundles of rank $2$ and degree $1$ and also computed the cohomology of the symmetric powers of the tangent sheaf.  We were informed by Teleman that in characteristic zero, these results (and even Hitchin's) are a direct consequence of his computations which arise from his works, \cite{bwb}, \cite{telemanfrenkel},  and \cite{fgt}. So the present paper is novel for its results in positive characteristics and also its approach. The essential new ingredient in our paper  is  the parahoric theory of torsors \cite{bs} and the landscape of {\em Parahoric-correspondences} \footnote{These are precise analogues of what are called Hecke correspondences in \cite{nr}.} (\ref{hecke}). {\em We also prove the results for all moduli stacks on stable parahoric torsors}. We add that Teleman's approach, depends heavily on his Borel-Weil-Bott results and Lie algebra computations. To the best of our knowledge, these do not generalize in any obvious manner for characteristics ${\tt p} > 0$. The stacks of parahoric torsors have been studied for quasi-split group schemes and as observed by Faltings, these have deep connections with the theory of local models for PEL Shimura varieties.  
 
 In the last section, we deal with certain classes of quasi-split group scheme $\cH$ and extend the deformation  results \eqref{quasisplitcase}  to the moduli stack $\cM^{^s}_X(\cH)$ of stable $\cH$-torsors on $X$.

\section{Group theoretical data}
Except in the last section, we assume that $G$ is a split group obtained as the base change of the Chevalley group scheme.
We shall fix a split maximal torus $T$ of $G$ and a Borel subgroup $B$ containing $T$. Let $\Phi$ be the root system relative to $(T,G)$ and $\cA$ denote the apartment of $T$ together with the origin $0$. Let ${\bf a}_0$ denote the unique alcove in $\cA$ whose closure  contains $0$ and is contained in the finite Weyl chamber determined by our chosen Borel subgroup. This determines a set $S$ of simple roots and $\mathbb{S}$ of simple {\it affine roots} $\alpha$. Let $\alpha_0$ denote the affine simple root lying outside of $S$.  Let $Y(T) := \Hom({\mathbb G}_m, T)$ the group of $1$-parameter subgroups of $T$. 
The set $S$ determines a system of positive roots $\Phi^{+} \subset {\Phi}$.  We now order the set $\Phi^+ = \left\{r_i | i = 1, \ldots, q \right\}$. We then have a family $\left\{u_{_r}:{\bg}_a \to G \mid r \in \Phi \right\}$ of {\em root homomorphisms}.

\section{Parahoric groups} 
 Let $K$ be a field equipped with a discrete valuation $v: K^{\times} \to \bz$ and we shall also assume that $K$ is {\em complete}. Let $A$ be the ring of integers, with residue field $k$. 

\subsection{\bf Parahoric subgroups}\label{boun} 
For $r \in \mathbb{R}$ let $[r]$ denote the greatest integer not greater than $r$. For $\theta \in Y(T) \otimes_{\bz} {\br}$, set 
\begin{equation}
m_{r}(\theta) := -[r(\theta)]. 
\end{equation}
Denote by ${\mathfrak P}_{_\theta}(K) \subset G(K)$ the subgroup generated by $T(A)$ and the root groups $U_{r} \left(z^{m_{r}} A \right)$ for all the roots $r \in \Phi$, i.e. we have:
\begin{equation}\label{gille-1}
{\mathfrak P}_{_\theta}(K) = \left\langle T(A),~~~~ U_{r} \left(z^{m_r(\theta)}A \right), ~~~r \in \Phi \right\rangle \subset G(K).
\end{equation}
In particular, ${\mathfrak P}_{_0}(K)$ is the maximal bounded subgroup $G(A) \subset G(K)$. 


\subsection{\bf  Standard parahorics} \label{stdparahorics} The {\em standard parahoric subgroups} of $G(K)$ are parahoric subgroups of the canonical hyperspecial parahoric subgroup  $G(A)$. These are the inverse images under the evaluation map $$\text{ev}: G(A) \to G(k)$$ of standard parabolic subgroups $P_I \subset G$, where $I \subset S$ is any subset of the simple roots. In particular, the {\em Iwahori subgroup} ${\mathfrak I}$ is a standard parahoric and indeed, ${\mathfrak I} = \text{ev}^{-1}(B)$, $B \subset G$ being the standard Borel subgroup containing the fixed maximal torus $T$.

\subsection{Maximal parahorics} Denote by $\left\{{\alpha}^* \mid \alpha \in S \right\}$ to be the basis dual to $\{\alpha \in S\}$, i.e. $\left(\alpha^*, r \right) = \delta_{\alpha,r}$. These generate the coweight lattice $P^{\vee}$.  For every $\alpha \in S$, we define 
\begin{equation}\label{thetaalpha}
\theta_{\alpha} = \left\{\alpha^* \over c_{\alpha} \right\} \in Y(T) \otimes_{\bz} {\br},
\end{equation}
Together with $0$, these are the coordinates of $\mathbf{a}_0$. 
Further $\left\{{\mathfrak P}_{_{\theta_{\alpha}}}(K) \mid \alpha \in S \right\}$ and ${\mathfrak P}_{_0}(K)$ represent $G(K)$-conjugacy classes of all {\em maximal parahoric subgroups} of $G(K)$.

\subsection{\bf  Hyperspecial Parahorics} \label{hyperspecial}  Let $\alpha_{_{max}}$ denote the highest root. For $\alpha \in S$, let coefficients $c_{\alpha} \in {\bz}^+$ be defined by 
\begin{equation}\label{calpha}
\alpha_{_{max}} = \sum_{\alpha \in S} c_{\alpha} \cdot \alpha.
\end{equation} 


The parahoric subgroup ${\mathfrak P}_{_{\theta_{\alpha}}}(K)$ is hyperspecial if and only if $c_{\alpha} = 1$ in the description of the long root $\alpha_{_{max}}$. Upto conjugation by $G(K)$, the hyperspecial parahorics are the following: in type $A_n$, all the $n+1$ maximal parahoric subgroups are {\em hyperspecial parahorics}; in types $B_n$ and $C_n$,  exactly $2$  maximal parahoric subgroups are hyperspecial; ${D_n}$ has exactly $4$ hyperspecial maximal parahoric subgroups; ${E}_6$ has exactly $3$; ${E}_7$ has exactly $2$ and finally the types $G_2, F_4$ and ${E}_8$ have only one hyperspecial maximal parahoric subgroup each.

\subsection{Bruhat-Tits group schemes} \label{lgpthedata}
We will work with a pointed projective curve $(X,x)$. 
Let $\cG_{\alpha}$ (resp. $\mathcal{G}_{_{\theta_{_\alpha}}})$ denote the standard parahoric group scheme on $\spec A$ associated to the simple root $\alpha$ (resp. the maximal parahoric group scheme associated to vertex $\theta_{_{\alpha}}$ of the Weyl alcove). Let us denote by the same notations the group scheme on the pointed projective curve $(X,x)$ obtained by gluing $G \times (X - \{x \})$ with $\cG_{\alpha}$ (resp. $\mathcal{G}_{_{\theta_{_\alpha}}}$) via the gluing function identity. Following \cite{bs}, let us denote the moduli stack of $\cG_\alpha$-torsors by $\cM_{\alpha}$ and the stack of $\mathcal{G}_{_{\theta_{_\alpha}}}$-torsors by $\cM_{_{\theta_{_\alpha}}}$. 
 

\section{Some group-theoretical computations in the split case}
We assume $G$ to be split. We begin by recalling the following invariant.
\begin{defi} \cite[Proposition 7.2.1, page 35]{bs} Let $\mu(\alpha)$ be the set of positive roots which have as a term the simple root $\alpha$ with the highest coefficient $c_\alpha$ i.e.
\begin{equation}\label{mu}
\mu(\alpha) : = \left\{r \in \Phi^+ \mid r = c_{\alpha}.\alpha +  \sum_{\beta \neq \alpha}  x_{\beta}.\beta \right\}.
\end{equation}
\end{defi}

The quotients $${{\mathfrak P}_{_{\theta_{\alpha}}}(K) \over {\mathfrak P}^{st}_{_{\alpha}}(K)},~~~ {{\mathfrak P}^{st}_{_{\alpha}}(K) \over {\mathfrak I}}$$ are supported on the residue field $k$ and are finite dimensional. Let us describe the unipotent subgroups appearing in these quotients. 

For $\alpha \in S$ let $P_{_\alpha} \subset G$ be the maximal parabolic subgroup. Let ${\mathfrak P}^{st}_{_{\alpha}}(K) := \text{ev}^{-1} \left(P_{_\alpha} \right)$ denote the {\em standard parahoric subgroup}. Then we have the obvious inclusions:
\begin{equation}\label{inclusions}
{\mathfrak I} \subset {\mathfrak P}^{st}_{_{\alpha}}(K) = {\mathfrak P}_{_{\theta_{\alpha}}}(K) \cap {\mathfrak P}_{_{0}}(K).
\end{equation}
In this split case, let us recall the definitions of 
\begin{eqnarray*}
 {\mathfrak I} &=& \left\langle T(A), U_r(A), U_{-r}(z.A) \mid r \in \Phi^+ \right\rangle, \\
{\mathfrak P}_{_{\theta_{\alpha}}}(K) &=& \left\langle T(A), U_r \left(z^{- \left[ \left(\theta_{_\alpha}, r \right) \right]}.A \right) \mid r \in \Phi \right\rangle, \\
 {\mathfrak P}^{st}_{_{\alpha}}(K) &=& \left\langle T(A), U_r \left(z^{m_{_{r,\alpha}}}.A \right) \mid r \in \Phi \right\rangle,
 \end{eqnarray*} 
where setting $\Phi_{_{S - \alpha}} := \Big\{r \in \Phi \mid r~does~not~involve~\pm\alpha\Big\}$ we have
 
\begin{eqnarray} \label{mralpha}
m_{_{r,\alpha}} = 
\begin{cases}
1 &\text{if $r \in \Phi^{-} - \Phi_{_{S - \alpha}}$}, \\
0 &\text{if $r \in \Phi^{+} \cup \Phi_{_{S - \alpha}}$}.
\end{cases}
\end{eqnarray}

Thus, with $\mu(\alpha)$ as in \eqref{mu}, we have
\begin{equation}\label{piside}
\text{\cursive {roots}}\left({{\mathfrak P}_{_{\theta_{\alpha}}}(K) \over {\mathfrak P}^{st}_{_{\alpha}}(K)}\right) = \mu(\alpha).
\end{equation}
 Denoting $f_{\theta_\alpha}$ the concave function corresponding to $\theta_\alpha$ we observe that
\begin{eqnarray} \label{fthetaalpha}
f_{\theta_\alpha}(r):=-\left[ \left(\theta_{_\alpha}, r \right) \right] = \begin{cases} -1  \iff  \left\{r \in \Phi^{+} \mid r = c_{\alpha}.\alpha +  \sum_{\beta \neq \alpha}  x_{\beta}.\beta \right\}, \\
 0  \iff  r \in \Phi_{_{S - \alpha}}, \\
 1  \iff  r \in \Phi^{-} - \Phi_{_{S - \alpha}}. 
 \end{cases}
\end{eqnarray}

Hence, we have \small 
\[
{U_r \left(z^{- \left[ \left(\theta_{_\alpha}, r \right) \right]}.A \right) \over U_r \left(z^{m_{_{r,\alpha}}}.A \right)} \simeq {\mathbb G}_{a,k}, ~~\iff~~ \left\{r \in \Phi^{+} \mid r = c_{\alpha}.\alpha +  \sum_{\beta \neq \alpha}  x_{\beta}.\beta \right\}.
\]\normalsize
On the other hand, 
\begin{equation} \label{hside}
\text{\cursive {roots}} \left({{\mathfrak P}_{_{0}}(K) \over {\mathfrak P}^{st}_{_{\alpha}}(K)} \right) = \left\{\Phi^{-} - \Phi_{_{S - \alpha}}\right\}.
\end{equation}
In particular, when the vertex $\alpha$ is {\em hyperspecial}, $c_{\alpha} = 1$ and we get the identification 
\begin{equation}\label{piversush}
\text{\cursive {roots}} \left({{\mathfrak P}_{_{\theta_{\alpha}}}(K) \over {\mathfrak P}^{st}_{_{\alpha}}(K)} \right) = - \text{\cursive {roots}}\left({{\mathfrak P}_{_{0}}(K) \over {\mathfrak P}^{st}_{_{\alpha}}(K)} \right).\end{equation}

\subsection{For exceptional groups with trivial centre} 
Let $\alpha$ be the root to which $\alpha_{0}$ attaches itself in the extended Dynkin diagram.   We have 
\begin{eqnarray*} 2=\alpha_{_{max}}^{\vee} \left(\alpha_{_{max}} \right)= \sum_{\gamma \in S} c_{\gamma} \alpha_{_{max}}^{\vee}(\gamma) = c_{\alpha}  \alpha_{_{max}}^{\vee}(\alpha) + \sum_{\gamma \in S - \alpha} c_{\gamma} \alpha_{_{max}}^{\vee}(\gamma).
\end{eqnarray*} 
 Further, by checking the Bourbaki tables we see that $\alpha^{\vee}_{_{max}}(\alpha)=1$ and that $c_{\alpha}$ is always $2$. Thus we get $$0= \sum_{\gamma \in S - \alpha} c_{\gamma} \alpha^{\vee}_{_{max}}(\gamma).$$ Since $\gamma \in S$ and $\alpha_{_{max}}$ are positive roots, $\alpha_{_{max}}^{\vee}(\gamma) $ is non-negative. Thus
 $$0 =\alpha^{\vee}_{_{max}}(\gamma) \quad \text{for} \quad \gamma \in S - \alpha.$$
For $\beta \in \Phi^{-}$, let $c_{\alpha}^{\beta} $ denote the coefficient of $\alpha$ when we write $\beta$ in terms of simple roots in $S$. Thus $c_{\alpha}^{\beta} \in \{0, -1,-2 \}$ and 
\begin{equation} \label{calphabeta}
\alpha^{\vee}_{_{max}}(\beta)= c^{\beta}_{\alpha}.
\end{equation}

\subsection{Parahoric-correspondence and some geometry} \label{hecke} 
 The inclusions \eqref{inclusions} of groups give morphisms 
 $$\cG_\alpha \to G_A, \quad \cG_\alpha \to \mathcal{G}_{_{\theta_{_\alpha}}}$$
 of  $A$-group schemes. These morphisms naturally extend to morphisms over the pointed curve $(X,x)$. This immediately gives the following diagram of Parahoric-correspondence on the moduli stacks (see \cite[8.2.1]{bs}, \cite[\S 7.6, (7.6.3)]{me}):
\begin{equation}\label{heckecorresp} \begin{tikzcd}
	& {\cM_\alpha} \\
	{\cM_{_{\theta_{_\alpha}}}} && {\cM} 
	\arrow["\pi", from=1-2, to=2-1]
	\arrow["h"', from=1-2, to=2-3]
\end{tikzcd}\end{equation}
Recall $(X,x)$ is our pointed projective curve. 
Let  $\mathcal{G}_{_{\theta_{_\alpha},x}}$ denote the $x$-fibre  of the group scheme $\mathcal{G}_{_{\theta_{_\alpha}}}$ and let $\mathcal{G}^{^{\tt{red}}}_{_{\theta_{_\alpha},x}}$ denote its reductive quotient. Let  $P_{_{\theta_{_\alpha}},x}$ denote the maximal parabolic subgroup corresponding to the deletion of $\alpha_0$ in $\mathbb{S} - \alpha$  where we recall that $\mathbb{S}$ denotes the set of affine simple roots. 
The basic observation is that $\pi$ is an \'etale locally trivial representable fibration with fibres 
\begin{equation} \label{basicobservation} \mathcal{G}^{^{\tt{red}}}_{_{\theta_{_\alpha},x}}/P_{_{\theta_{_\alpha}},x},
\end{equation}  (see \cite[4.3.12]{pp} for a more general result). Similarly, the $h$-fibre is isomorphic to $$G/P_\alpha,$$ where $P_\alpha \subset G$ is the maximal parabolic subgroup of $G$ containing $B$. 

Let us elaborate on the above fibrations. Let $E \ra X \times \cM$ denote the universal bundle and $E_{\theta_\alpha} \ra X \times \cM_{\theta_\alpha}$ denote the universal torsor. Let $E_x$ and $E_{\theta_\alpha,x}$ denote their restrictions to $x \times \cM$ and $x \times \cM_{\theta_\alpha}$ respectively. Let us view $\mathcal{G}^{^{\tt{red}}}_{_{\theta_{_\alpha},x}}/P_{_{\theta_{_\alpha},x}}$ as homogenous space for the group $\mathcal{G}_{_{\theta_{_\alpha},x}}$. Then we have 
\begin{eqnarray} \label{ModStackasE(G/P)}
h: \cM_{\alpha} \ra \cM & = & E_x(G/P_{\alpha}) \ra \cM, \\ \pi: \cM_{\alpha} \ra \cM_{\theta_\alpha} &= & E_{\theta_\alpha,x} \times^{\mathcal{G}_{_{\theta_{_\alpha},x}}} \left( \mathcal{G}^{^{\tt{red}}}_{_{\theta_{_\alpha},x}}/P_{_{\theta_{_\alpha},x}} \right) \ra \cM_{\theta_\alpha}. \end{eqnarray}

The group scheme $\cG_\alpha$ is obtained from both the hyperspecial group scheme $G_A$ as well as  the maximal parahoric group scheme $\mathcal{G}_{_{\theta_{_\alpha}}}$ by a {\em N\'eron dilatation} (see \cite[\S 2.2]{bpconformal}). In fact, the group scheme $\cG_\alpha$ is the N\'eron blow-up of $\mathcal{G}_{_{\theta_{_\alpha}}}$ 
along the inverse image of the subgroup $P_{_{\theta_{_\alpha}},x}$ under the quotient map $\mathcal{G}_{_{\theta_{_\alpha},x}} \rightarrow                                             \mathcal{G}^{^{\tt{red}}}_{_{\theta_{_\alpha},x}}$.


\subsection{Relations between universal torsors} Let $E \to X \times \cM$ (resp. $\cE_\alpha \to X \times \cM_\alpha$) be the universal $G_X$-torsor (resp. $\cG_\alpha$-torsor). The inclusions \eqref{inclusions} give canonical morphisms $\cG_\alpha \to G_X$. Then we have the following relations:
\begin{enumerate}\label{pullbacks}
\item[a)] $\cE_{\alpha} \times^{\cG_\alpha} G \simeq (1 \times h)^*(E)$,
\item[b)] $\cE_{\alpha}(\gfr) \simeq (1 \times h)^* \left(E(\gfr) \right)$.
\end{enumerate}

\section{The Kodaira-Spencer map}
We recall a lemma from \cite[Proposition 4.4]{nr}. Fix a point $x \in X$.
\begin{lem}\label{propo4.4} Let $\{V_s\}_{_{s \in S}}$ be a family of vector bundles on $X$ parametrized by $S$ and let $\tcq:\mathbb P(V^*_x) \to S$ be the projection. Let  $\tau_x$ denote the torsion sheaf on $X$ of height $1$ at $x \in X$. Consider the vector bundle $\cK^*$ on  $X \times \mathbb P \left(V^*_x \right)$ defined by the following exact sequence:
\begin{equation}\label{tauto}
0 \to \cK^* \to \left(1 \times \tcq \right)^* \left(V^* \right) \to p_{_{\mathbb P \left(V^*_x \right)}}^* \cO_{\tcq}(1) \otimes p_{_X}^* \left(\tau_x \right) \to 0.
\end{equation}
 Then the Kodaira-Spencer map for the family $\left\{\cK_y \right\}_{_{y \in X}}$ of vector bundles parametrised by $X$:
\begin{equation}
\rho_y:\cT_{_{y,X}} \to \text{H}^1 \left( \mathbb P \left(V^*_x \right),\text{ad}_y(\cK) \right)
\end{equation}
 is injective at the point $y = x$.
\end{lem}

\begin{proof} We outline an argument for the case when $\text{rank}(V) = 2$. The proof lies in showing that the map $\rho_x$ is non-zero, and injectivity follows.  
Let $R := {\hat\cO}_x \simeq k[\![ t ]\!]$. Let $B := R \left[t_0,t_1 \right]$. Then \eqref{tauto} reduces to an exact sequence of graded $B$-modules:
\begin{equation}
0 \to M \to B \oplus B \stackrel{f}\to k[t_0,t_1] \to 0
\end{equation}
where for $h_j \in B$, denoting their images in $k\left[t_0,t_1 \right]$ by $\overline{h_j}$, we have $f \left(h_0,h_1 \right) = \overline{h_0}.t_0 + \overline{h_1}.t_1$. It can be checked that $M$ is generated by $$E_0 = \left(-t_1,t_0 \right), E_1 = (t,0) \quad \text{and} \quad E_2 = (0,t)$$ and these satisfy the unique relation $$t.E_0 + t_1.E_1 + (-1)t_0.E_2 = 0.$$ We then get the exact sequence:
\begin{equation}
0 \to B(-1) \stackrel{i}\to M \stackrel{j} \to B \to 0
\end{equation}
where $j(E_0) = 0, j(E_1) = t_0, j(E_2) = t_1$. So the family $\cK^*$ in an analytic neighbourhood of $x$ gets identified with a family of extensions of $\cO(1)$ by $\cO(-1)$ where $x$ corresponds to the split extension. The infinitesimal deformation map for this family of extensions is then checked to be non-zero.    

\end{proof}

\begin{thm}\label{nrdef}
Let $E$ be the universal family on  $X \times \cM$, then the Kodaira-Spencer map, $$\rho_X: \cT_{X} \ra R^1 p_{_{X,*}} E(\gfr),$$
is injective. Further, $$\rho_x: \cT_{x,X} \ra H^1 \left(\cM_{\alpha}, \cE_{\alpha}(\gfr) \right)$$ is also injective.
\end{thm}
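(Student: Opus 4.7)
My plan is to reduce the injectivity of $\rho_X$ to pointwise non-vanishing at $x$ (and, by varying $x$, at every point of $X$), then to use the Hecke correspondence \eqref{heckecorresp} to identify the two Kodaira-Spencer maps of the theorem, and finally to deduce non-vanishing from a parahoric analog of Lemma~\ref{propo4.4}.

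First, since $\cT_X$ is a line bundle on $X$, the fibre of $\rho_X$ at each $y \in X$ is a linear map out of a one-dimensional space, so its injectivity there is equivalent to non-vanishing. By Nakayama, $\rho_X$ itself is injective once every fibre is non-zero. Because the marked point plays no distinguished role in the group-theoretic data, it suffices to prove non-vanishing at the single point $x$: the same argument then applies verbatim after replacing $x$ with any other $y \in X$.

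Next I would compare the two Kodaira-Spencer maps via the Hecke correspondence. Part (b) of \ref{pullbacks} gives $\cE_\alpha(\gfr) \simeq (1 \times h)^* E(\gfr)$, which restricted to $\{x\} \times \cM_\alpha$ reads $(\cE_\alpha)_x(\gfr) \simeq h^* E_x(\gfr)$. Functoriality of Kodaira-Spencer under pullback of the parameter stack gives a commutative triangle
\[
\begin{tikzcd}
\cT_{x,X} \arrow[r, "\rho_x^{\cM}"] \arrow[dr, "\rho_x^{\cM_\alpha}"'] & H^1(\cM, E_x(\gfr)) \arrow[d, "h^*"] \\
& H^1(\cM_\alpha, (\cE_\alpha)_x(\gfr)).
\end{tikzcd}
\]
The map $h : \cM_\alpha \to \cM$ is representable and \'etale-locally trivial with fibres isomorphic to $G/P_\alpha$ (see \ref{hecke}). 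Kempf's vanishing theorem, valid in arbitrary characteristic for the structure sheaf of a partial flag variety, gives $h_*\cO_{\cM_\alpha} = \cO_{\cM}$ and $R^i h_*\cO_{\cM_\alpha} = 0$ for $i > 0$; the projection formula and the Leray spectral sequence then imply that $h^*$ is an isomorphism on $H^1$. Consequently $\rho_x^{\cM}$ is non-zero iff $\rho_x^{\cM_\alpha}$ is, and both assertions of the theorem collapse to the single statement $\rho_x^{\cM_\alpha} \neq 0$.

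Finally, I would establish non-vanishing of $\rho_x^{\cM_\alpha}$ by a parahoric analog of Lemma~\ref{propo4.4}. The fibration $\pi : \cM_\alpha \to \cM_{\theta_\alpha}$, with fibres the partial flag variety $\mathcal{G}^{\mathrm{red}}_{\theta_\alpha, x}/P_{\theta_\alpha, x}$, plays the role of the projective bundle $\mathbb{P}(V^*_x)$, and $\cE_\alpha$ plays the role of the kernel bundle $\cK$. On the formal disc $\spec A$ around $x$, the description of $\cG_\alpha$ as the N\'eron dilatation of $\mathcal{G}_{\theta_\alpha}$ along $P'_{\theta_\alpha,x}$ (see \ref{hecke}) lets one write the transition data of $\cE_\alpha$ explicitly in terms of the concave function $f_{\theta_\alpha}$ of \eqref{fthetaalpha}; infinitesimally moving $x$ perturbs this data in the root directions $r \in \mu(\alpha)$ and should produce a non-trivial \v{C}ech $1$-cocycle in $(\cE_\alpha)_x(\gfr)$. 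Equivalently, restricting to the sub-$SL(2)$ attached to the simple root $\alpha$ reduces the calculation verbatim to that at the end of the proof of Lemma~\ref{propo4.4}. The main obstacle is precisely this last local computation: isolating the $U_\alpha$-contribution to the first-order deformation of the transition data and verifying that the resulting extension class is non-split, uniformly in characteristic.
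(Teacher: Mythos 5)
Your two reductions are sound, but the proof has a genuine gap exactly where you flag it: the non-vanishing of $\rho_x^{\cM_\alpha}$ is never established. You invoke a ``parahoric analog of Lemma~\ref{propo4.4}'' that is neither stated nor proved anywhere, and the sketch offered in its place (``infinitesimally moving $x$ \ldots should produce a non-trivial \v{C}ech $1$-cocycle'') is not a routine verification: the class in question lives in $H^1\big(\cM_\alpha, \cE_{\alpha,x}(\gfr)\big)$, the cohomology of the whole moduli stack, not of a single Hecke fibre. In particular, the suggestion that one can ``restrict to the sub-$SL(2)$ attached to $\alpha$'' and quote the computation at the end of Lemma~\ref{propo4.4} verbatim is unjustified --- one would have to show that restriction to a fibre of $\pi$ (or to a suitable sub-family) does not kill the deformation class, which is a problem of the same order of difficulty as the original statement. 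So after your reductions the entire content of the theorem remains unproved.

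The paper avoids needing any parahoric local computation by running the reduction in the opposite direction. It first proves pointwise injectivity for an \emph{arbitrary} family $W$ of vector bundles on $X \times S$: setting $T = \mathbb{P}(W_x)$ and performing an elementary transformation at $x$ to get $K$, then a second one over $\mathbb{P}(K_x^*)$, it recovers the pullback $W_{\psi\phi}$ of $W$ and thereby lands exactly in the hypotheses of Lemma~\ref{propo4.4}; the projective-bundle isomorphisms $H^1\big(\mathbb{P}(K_x^*), \mathrm{ad}_x W_{\psi\phi}\big) \simeq H^1(T, \mathrm{ad}_x W_\phi) \simeq H^1(S, \mathrm{ad}_x W)$ then transport the injectivity back to $S$. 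It next transfers this to $G$-torsors by choosing a faithful representation $G \hookrightarrow SL(W)$: by canonicity of Kodaira--Spencer, $\rho_W$ factors as $\cT_X \to R^1p_{_{X,*}}E(\gfr) \to R^1p_{_{X,*}}\big(\mathrm{ad}\,E(W)\big)$, and injectivity of the composite $\rho_W$ forces injectivity of $\rho_G$. Only then is the $\cM_\alpha$ statement deduced, by factoring $\rho_x^{\cM_\alpha}$ through $H^1\big(\cM, E_x(\gfr)\big)$ via \eqref{pullbacks} --- i.e.\ the parahoric case is a \emph{consequence} of the $\cM$ case, not the other way around. Your Kempf-vanishing observation that $h^*$ is an isomorphism (injectivity would suffice) on $H^1$ is correct in all characteristics and in fact supplies the detail behind the paper's terse ``factors via'' remark; so the fix is to keep your first two steps and replace the missing local computation by the double elementary transformation plus faithful-representation argument, which is the actual engine of the paper's proof.
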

\begin{proof}
Let $W \ra X \times S$ be a family of vector bundles of degree zero. Let $x \in X$ be a closed point. Set $T:=\mathbb{P} \left(W_x \right)$ and $\phi: T \ra S$ be the natural projection. Let $$W_{_\phi}:= (1 \times \phi)^*(W).$$ This defines a family of bundles on $X$ parametrized by $T$. Let $\tau_x$ be the torsion sheaf of height $1$ on $X$ supported at $x$ and $p_{_T}: X \times T \to T$ the projection. 

We then have the universal quotient morphism $$W_{_\phi} \ra p_{_T}^* (\cO_{_\phi}(1)) \otimes p_{_X}^* \left(\tau_x \right)$$ on $X \times T$, whose kernel we denote as $K$. Whence, we have the following short exact sequence of sheaves on $X \times T$:
\begin{equation}
0 \ra K \ra W_{_\phi} \ra p_{_T}^* (\cO_{_\phi}(1)) \otimes p_{_X}^* \left(\tau_x \right) \ra 0
\end{equation} 
where $K$ is a family of vector bundles of degree $-1$. The bundle $K$ is classically called {\it an elementary transformation of $W_{_\phi}$ at $x$}.

We now consider $\psi: \mathbb{P} \left(K_x^* \right) \ra T$. 
Then the elementary transformation of $(1 \times \psi)^* \left(K^* \right)$ at $x$ is $(1 \times \psi)^* \left(W_{_\phi}^* \right)$. We denote its dual as  $$W_{_{\psi\phi}}.$$ By \eqref{propo4.4}, we have the injection: 
\begin{equation}\label{inj1}
\rho_x: \cT_{x,X} \ra H^1 \left(\mathbb{P} \left(K_x^* \right), ad_x\left(W_{_{\psi\phi}} \right) \right) .
\end{equation} 
We also have isomorphisms
\begin{equation}\label{inj2}
H^1 \left(\mathbb{P} \left(K_x^* \right), ad_x\left(W_{_{\psi\phi}} \right) \right) \simeq H^1 \left(T, ad_x W_{_{\phi}} \right) \simeq H^1 \left(S, ad_x W \right).
\end{equation}
Let $p_{_X}: X \times S \ra X$ denote the first projection. Whence, by using the injection \eqref{inj1} and the identifications \eqref{inj2}, we have an injection $\rho_x: \cT_x \ra H^1(S, ad_x W)$ at each $x$ and hence an injection
\begin{equation}
\rho: \cT_X \ra R^1 p_{_{X,*}} (\text{ad}~W).
\end{equation}  
Let $E \ra X \times S$ be a principal $G$-bundle. We fix an faithful representation $G \hookrightarrow SL(W)$. By the associated construction this gives us the family of vector bundles $E(W) \ra X \times S$ such that $E(W)_s$ has degree $0$ for each $s \in S$.

We have a natural commutative diagram by the canonicity of the Kodaira-Spencer map:
\begin{equation}
\xymatrix{
\cT_X \ar[rr]^{\rho_G} \ar[rrd]_{\rho_W} && R^1 p_{_{X,*}} E(\gfr) \ar[d] \\
&& R^1 p_{_{X,*}} \left(\text{ad}~E(W) \right)
}
\end{equation}
Since $\rho_W$ is injective it follows that $\rho_G$ is an injective. 

Since $\rho_x: \cT_{x,X} \ra  H^1 \left(\cM_{\alpha}, \cE_{\alpha}(\gfr)  \right)$ factors via $H^1 \left(\cM, E(\gfr) 
\right)$ by \eqref{pullbacks}, the second injectivity now follows.
\end{proof}

\section{Closer look at Parahoric-correspondences}
\subsection{Some general considerations on flag bundles} Let $E \to T$ be a principal $G$-bundle and let $P \subset G$ be a parabolic subgroup given by the subset $I \subset S$. Let $\mathfrak g$ and $\mathfrak p$ be the respective Lie algebras and $E(\gfr/\pfr)$ be the associated fibration.  
\begin{thm} \label{tangentspacethm} Let $\tcf:E(G/P) \to T$ be the fibration with fibres $G/P$. If $\cT_{_{\tcf}}$ denotes the relative tangent sheaf, then we have the following canonical isomorphisms: 
\begin{equation}\label{tgt}
R^j{\tcf}_* \left(\cT_{_{\tcf}} \right) = 
\begin{cases}
E(\gfr) &j = 0 \\
0 &\text{otherwise}
\end{cases}
\end{equation}
except in three {\em exceptional} cases in the sense of Demazure \cite{demazure1977} (see also \cite[Theorem 2, page 131]{Akhiezer1995LieGA}). And
\begin{equation}\label{cotgt}
R^j{\tcf}_* \left(\cT_{_{\tcf}}^* \right) =
\begin{cases}
\cO_X^{\ell} & j = 1 \\
0 & j \neq 1
\end{cases}
\end{equation}
where $\ell$ is the number of Weyl group elements $w$ of length one such that for every $\beta \in I$, $w \beta$ is also a positive root. In particular, when $P$ is a maximal parabolic subgroup associated to  the simple root $\alpha$, i.e., $I = S - \alpha$, then $\ell = 1$, and the set defining it is the simple reflection ${\text\cursive s}_\alpha$.
\end{thm} 
\begin{proof} The proof of \eqref{tgt} follows immediately from a classic result due to J. Tits \cite{tits62}. For positive characteristics see \cite[Proposition 2, page 182]{demazure1977}. For an exposition, see Akhiezer \cite{Akhiezer1995LieGA}. The proof of \eqref{cotgt} follows from a theorem due to Marlin \cite{Marlin1977} over fields of characteristic zero. For positive characteritics, see Jantzen \cite[Page 245]{Jantzen}.
\end{proof}
\begin{rem}\label{exceptionalcases} Two of the three {\em exceptional} cases are certain non-maximal parabolic subgroups in types $\tt B_{_n}$ (with $n \geq 3$) and $\tt C_{_n}$ (with $n \geq 2$).  The remaining exceptional case is in the group $\tt G_{_2}$ coming from the maximal parabolic subgroup associated to the simple root $\alpha_2$ away from the extended vertex in the extended Dynkin diagram. 

Furthermore, if all root lengths are the same then these cases are all {\em non-exceptional} cases.  
\end{rem}

\subsection{The cohomology computations}
\begin{thm} \label{key1} For $G \neq G_2,F_4,E_8$, let $\theta_\alpha$ be a hyperspecial vertex different from the origin $0$. Let $\cT_\pi$ (resp. $\cT_h$) be the relative tangent sheaves. As locally free sheaves on $\cM_\alpha$, we have the following canonical isomorphism:
\begin{equation}\label{nara1}
\cT_{\pi}^* \simeq \cT_h.
\end{equation}
Whence,  $H^i \left(\cM_{\alpha}, \cT_h \right) \simeq H^{i-1} \left(\cM_{\theta_{_{\alpha}}}, \cO \right)$.
\end{thm}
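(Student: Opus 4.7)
The plan is to produce a canonical duality $\cT_\pi^* \simeq \cT_h$ on $\cM_\alpha$ by matching root data on the two fibers of the Hecke correspondence \eqref{heckecorresp}, and then to apply the flag-bundle cohomology result \eqref{cotgt} together with the Leray spectral sequence for $\pi$.

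First I would realize both relative tangent sheaves as associated bundles. Since $h$ is \'etale-locally trivial with fiber $G/P_\alpha$, at a $\cG_\alpha$-torsor $\cE_\alpha$ (which encodes a $P_\alpha$-reduction $E_{P,x}$ of its underlying $G$-torsor at $x$) the fiber of $\cT_h$ is $E_{P,x}\times^{P_\alpha}(\gfr/\pfr_\alpha)$, whose $T$-weights are exactly $\Phi^-\setminus \Phi_{_{S-\alpha}}$ by \eqref{hside}. Similarly, $\pi$ has fiber $\mathcal{G}^{\tt{red}}_{\theta_\alpha,x}/P_{\theta_\alpha,x}$, with $P_{\theta_\alpha,x}$ a maximal parabolic coming from the deletion of $\alpha_{_0}$ in $\mathbb{S}-\alpha$, so the fiber of $\cT_\pi$ at $\cE_\alpha$ is an analogous associated bundle whose $T$-weights are $\mu(\alpha) = \Phi^+ \setminus \Phi_{_{S-\alpha}}$, using crucially that $c_\alpha = 1$ at a hyperspecial vertex, as in \eqref{piside}.

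Next, by \eqref{piversush} these two sets of weights are exactly negatives of each other. I would promote this numerical coincidence to a canonical isomorphism of sheaves: the Killing form on $\gfr$ restricted to opposite root spaces gives a non-degenerate, $P_\alpha$-equivariant pairing between the two representations, and this pairing extends through the Neron-dilatation identifications of section \ref{hecke} relating $\cG_\alpha$ to $G_A$ and to $\mathcal{G}_{\theta_\alpha}$. Globalising along \'etale-local trivialisations of both fibrations then yields the desired canonical $\cT_\pi^* \simeq \cT_h$. I expect the main obstacle to lie precisely here, in upgrading the combinatorial negation of roots of \eqref{piversush} to a globally defined canonical sheaf isomorphism on $\cM_\alpha$: one must track carefully how the standard parahoric $\cG_\alpha$ sits simultaneously inside $G_A$ and inside $\mathcal{G}_{\theta_\alpha}$ and verify that the Killing pairing is equivariant for both residual parabolic actions on the nose and not merely up to a character.

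Finally, for the cohomology statement, I would apply \eqref{cotgt} to $\pi$ (whose fiber carries a maximal parabolic, so $\ell = 1$) to obtain
\[
R^q\pi_*\cT_h \;\simeq\; R^q\pi_*\cT_\pi^* \;=\; \begin{cases}\cO_{\cM_{\theta_\alpha}} & q = 1,\\ 0 & q \neq 1.\end{cases}
\]
The Leray spectral sequence for $\pi$ then degenerates at $E_2$, collapses to a single non-zero row at $q=1$, and produces the claimed shift $H^i(\cM_\alpha,\cT_h) \simeq H^{i-1}(\cM_{\theta_\alpha},\cO)$.
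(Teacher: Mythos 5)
Your proposal follows essentially the same route as the paper's proof: both realize $\cT_h$ and $\cT_\pi$ as associated bundles at $x$ with root data given by \eqref{hside} and \eqref{piside}, invoke $c_\alpha = 1$ to obtain \eqref{piversush}, and then combine \eqref{cotgt} (with $\ell = 1$ for the maximal parabolic) with the Leray spectral sequence for $\pi$ to get the degree shift. The only difference is that where the paper tersely asserts that \eqref{piversush} ``gives'' \eqref{nara1}, you supply an explicit mechanism via the Killing form --- a reasonable elaboration, with the caveat that in small positive characteristic the Killing form can degenerate, so it is safer to use the integral Chevalley-basis pairing (or the fact that at a hyperspecial vertex the nilradical is abelian and the relevant modules are dual minuscule Levi-modules) to make the isomorphism canonical.
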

\begin{proof} Let $\cE_\alpha$ denote the universal $\cG_\alpha$-torsor on $X \times \cM_\alpha$. Since $\cG_\alpha$ is a standard parahoric group scheme, it comes with a canonical inclusion $\cG_\alpha \to G_X = G \times_k X$ on $X$. So we can take the associated $G_X$-torsor $$E_\alpha := \cE_\alpha \times^{\cG_\alpha} G$$ on $X \times \cM_\alpha$.

Let $\mathfrak g := \text{Lie}(G)$ and $\mathfrak p_\alpha:= \text{Lie}(P_\alpha)$.
We consider its restriction $E_{\alpha,x}$ which is a $G$-bundle on $x \times  \cM_\alpha$. Let us take the associated fibration $$E_{\alpha,x}\left( \mathfrak g/\mathfrak p_\alpha \right):= E_{\alpha,x} \times^{G} \mathfrak g/\mathfrak p_\alpha$$ for the adjoint action.
It is not hard to check that we have an isomorphism:
\begin{equation}
\cT_h \simeq E_{\alpha,x}\left( \mathfrak g/\mathfrak p_\alpha \right).
\end{equation}
We have a similar homomorphism $\cG_\alpha \to \cG_{_{\theta_{_\alpha}}}$ induced by the inclusion of the parahoric subgroup \eqref{inclusions}. We can take the extension of structure group of $\cE_\alpha$ by this homomorphism and get the $\cG_{_{\theta_{_\alpha}}}$-torsor $\cE_{_{\theta_{_\alpha}}}$ on $X \times \cM_\alpha$ and its restriction $\cE_{_{\theta_{_\alpha},x}}$ to $x \times \cM_\alpha$.

With notations as in \S \ref{hecke}, let $\mathfrak g_{_{\theta_{_\alpha}}} := \text{Lie}\left(\mathcal{G}^{^{\tt{red}}}_{_{\theta_{_\alpha},x}} \right)$ and let $\mathfrak p_{_{\theta_{_\alpha}}} := \text{Lie}\left(P_{_{\theta_{_\alpha}}} \right)$. Then by \eqref{basicobservation}, as for $\cT_h$ we have the identification
\begin{equation}
\cT_\pi \simeq \cE_{_{\theta_{_\alpha},x}}( \mathfrak g_{_{\theta_{_\alpha}}}/\mathfrak p_{_{\theta_{_\alpha}}}).
\end{equation}
It follows that \eqref{hside} (resp. \eqref{piside}) give the set of roots defining $E_{\alpha,x}\left( \mathfrak g/\mathfrak p_\alpha \right)$ (resp. $\cE_{_{\theta_{_\alpha},x}}( \mathfrak g_{_{\theta_{_\alpha}}}/\mathfrak p_{_{\theta_{_\alpha}}})$). 
The identification \eqref{piversush} gives the isomorphism \eqref{nara1}.

By \eqref{cotgt} we have 
\begin{equation}\label{oneshift}
H^i \left(\cM_{\alpha}, \cT_h \right) \simeq H^i \left(\cM_{\alpha}, \cT^*_\pi \right) \simeq H^{i-1}\left(\cM_{\theta_{_{\alpha}}}, \cO \right).
\end{equation}

\end{proof}

\begin{thm} \label{key2} Let $G= G_2,F_4$ or $E_8$, when $0$ is the only hyperspecial vertex. Let $\alpha \in S$ be the unique simple root to which $\alpha_0$ connects in the extended Dynkin diagram. The coefficient of $\alpha$ in $\alpha_{_{max}}$ is always $2$ in these three cases, and we have $\mu(\alpha)$ (see \eqref{mu}) is the singleton set $\left\{ \alpha_{_{max}} \right\}$. Moreover, the fiber of $\pi$ in the Parahoric-correspondence diagram \eqref{heckecorresp} is canonically isomorphic to $$SL \left(\alpha_{_{max}} \right)/B \left(\alpha_{_{max}} \right) \simeq \mathbb{P}^1.$$ 
 We have $H^i \left(\cM_{\alpha}, \cT_h \right) \simeq H^{i-1} \left(\cM_{\theta_{_{\alpha}}}, \cO \right)$.
\end{thm}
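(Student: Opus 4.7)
The plan is to establish the numerical and structural claims and then compute the cohomology via a Leray argument.

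For the coefficient assertion, I consult the Bourbaki tables for the extended Dynkin diagrams of $G_2, F_4, E_8$: in each case the unique simple root $\alpha$ adjacent to $\alpha_0$ appears with coefficient $c_\alpha = 2$ in $\alpha_{_{max}}$ (for $G_2$, $\alpha_{_{max}} = 2\alpha_1 + 3\alpha_2$; for $F_4$, $\alpha_{_{max}} = 2\alpha_1 + 3\alpha_2 + 4\alpha_3 + 2\alpha_4$; for $E_8$, the coefficient on the endpoint to which $\alpha_0$ attaches is $2$). For $\mu(\alpha) = \{\alpha_{_{max}}\}$ I use the identities already recorded in the subsection on exceptional groups with trivial centre: $\alpha_{_{max}}^\vee$ vanishes on $S - \alpha$ and equals $1$ on $\alpha$, so $\alpha_{_{max}}^\vee(r) = c_\alpha^r$ for every $r \in \Phi^+$. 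Combined with the classical Bourbaki fact that $\alpha_{_{max}}^\vee(r) \leq 2$ with equality only for $r = \alpha_{_{max}}$, this forces $c_\alpha^r = 2 \iff r = \alpha_{_{max}}$.

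For the fibre of $\pi$: by the general setup \eqref{heckecorresp}, this fibre is $\mathcal{G}^{^{\tt{red}}}_{_{\theta_{_\alpha},x}}/P_{_{\theta_{_\alpha}},x}$, where $\mathcal{G}^{^{\tt{red}}}_{_{\theta_{_\alpha},x}}$ has Dynkin diagram $\mathbb{S} - \alpha$. In each of the three exceptional cases the extended Dynkin diagram has $\alpha_0$ as a leaf attached only to $\alpha$, so removing $\alpha$ disconnects $\alpha_0$ from $S - \alpha$, yielding $\mathcal{G}^{^{\tt{red}}} \simeq SL(\alpha_{_{max}}) \times L_{S-\alpha}$ up to isogeny, with the rank-one factor generated by the root $\alpha_{_{max}}$ (so its simple root is $\alpha_0 = -\alpha_{_{max}}$). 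Since $P_{_{\theta_{_\alpha}},x}$ is obtained by further deleting $\alpha_0$ from $\mathbb{S} - \alpha$, we get $P_{_{\theta_{_\alpha}},x} = B(\alpha_{_{max}}) \times L_{S-\alpha}$, and the fibre is $SL(\alpha_{_{max}})/B(\alpha_{_{max}}) \simeq \mathbb{P}^1$.

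For the cohomology I filter the $P_\alpha$-module $\mathfrak g/\mathfrak p_\alpha$ by $\alpha$-depth. Set $N := \bigoplus_{r \in \Phi^+,\, c_\alpha^r = 1} \mathfrak g_{-r} \subset \mathfrak g/\mathfrak p_\alpha$. A direct tabulation of $\alpha$-coefficients in $[\mathfrak g_s, \mathfrak g_{-r}] \subset \mathfrak g_{s-r}$ for $\mathfrak g_s \subset \mathfrak p_\alpha$ shows $N$ is $P_\alpha$-stable, with one-dimensional quotient spanned by the image of $\mathfrak g_{-\alpha_{_{max}}}$. Using $\mu(\alpha) = \{\alpha_{_{max}}\}$ one further checks that the unipotent radical of $P_\alpha$ acts trivially on this quotient (the relevant bracket images land in $N + \mathfrak p_\alpha$), so the $P_\alpha$-action on the quotient factors through $P_\alpha \twoheadrightarrow P_{_{\theta_{_\alpha}}}$, identifying the associated line bundle with $\cT_\pi^* \simeq \cE_{_{\theta_{_\alpha},x}}(\mathfrak g_{-\alpha_{_{max}}})$. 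This yields a short exact sequence on $\cM_\alpha$
\[
0 \to \cE_{\alpha,x}(N) \to \cT_h \to \cT_\pi^* \to 0.
\]
On each fibre $F \simeq \mathbb{P}^1$ of $\pi$, the subbundle $\cE_{\alpha,x}(N)|_F$ is filtered by line bundles of $T$-weight $-r$ with $c_\alpha^r = 1$, each of degree $-\alpha_{_{max}}^\vee(r) = -1$, hence with trivial cohomology; cohomology and base change then gives $R^i\pi_* \cE_{\alpha,x}(N) = 0$. Combined with $R^1\pi_* \cT_\pi^* \simeq \cO_{\cM_{_{\theta_{_\alpha}}}}$ from \eqref{cotgt}, the long exact sequence yields $R^0\pi_* \cT_h = 0$ and $R^1\pi_* \cT_h \simeq \cO_{\cM_{_{\theta_{_\alpha}}}}$, and Leray degenerates to give $H^i(\cM_\alpha, \cT_h) \simeq H^{i-1}(\cM_{_{\theta_{_\alpha}}}, \cO)$.

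The main obstacle is the global identification $\cE_{\alpha,x}(\mathfrak g_{-\alpha_{_{max}}}) \simeq \cT_\pi^*$. Fibrewise this is immediate, since both restrict to $\cO(-2)$ on $F$, but globally one must verify that the structure-group extension along $\cG_\alpha \hookrightarrow \cG_{_{\theta_{_\alpha}}}$ intertwines the two constructions. This reduces to the fact that the surjection $P_\alpha \twoheadrightarrow P_{_{\theta_{_\alpha}}}$ induced by the reductive quotient has unipotent kernel, acting trivially on the one-dimensional representation $\mathfrak g_{-\alpha_{_{max}}}$; the crucial input is again $\mu(\alpha) = \{\alpha_{_{max}}\}$, which prevents spurious bracket terms from appearing.
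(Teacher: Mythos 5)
Your proposal is correct, and its numerical core coincides with the paper's: the paper likewise extracts $c_{\alpha}=2$ and $\mu(\alpha)=\{\alpha_{_{max}}\}$ from the Bourbaki tables (your derivation via $\alpha_{_{max}}^{\vee}(r)=c^{r}_{\alpha}$, together with the fact that pairing with $\alpha_{_{max}}^{\vee}$ equals $2$ only at $\alpha_{_{max}}$, is exactly the content of \eqref{calphabeta}), identifies the $\pi$-fibre as the $\mathbb{P}^1$ of the $\SL_2$ generated by $\pm\alpha_{_{max}}$ via \eqref{piside}, and closes with the Leray spectral sequence. Where you genuinely diverge is the middle step. The paper restricts $\cT_h$ to a fibre of $\pi$ and decomposes it there into line bundles of degrees $\alpha_{_{max}}^{\vee}(\beta)=c^{\beta}_{\alpha}$ for $\beta \in \Phi^{-}-\Phi_{S-\alpha}$ --- all equal to $-1$ except the single $-2$ at $-\alpha_{_{max}}$ --- and then directly asserts $R^0\pi_*\cT_h=0$ and $R^1\pi_*\cT_h=\cO$. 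You instead globalize the same degree count into the $P_\alpha$-stable two-step filtration $0 \to \cE_{\alpha,x}(N) \to \cT_h \to \cT_{\pi}^{*}\to 0$ arising from the $\alpha$-grading of $\gfr$ (which has depth $2$ precisely because $c_\alpha = 2$), kill the sub-piece by cohomology and base change, and read off the quotient's $R^1$ from \eqref{cotgt}. This buys something real: fibrewise one-dimensionality of $H^1$ only shows that $R^1\pi_*\cT_h$ is a line bundle, and it is your identification of the quotient with $\cT_{\pi}^{*}$ that makes the trivialization $R^1\pi_*\cT_h \simeq \cO$ canonical --- a point the paper's fibrewise argument leaves implicit. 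That identification is legitimate for the reason you indicate: the character $-\alpha_{_{max}}$ kills all unipotent parts and the derived group of the Levi $L_{S-\alpha}$, since $\alpha_{_{max}}^{\vee}(\gamma)=0$ for $\gamma \in S-\alpha$ by the paper's own computation in the subsection on exceptional groups with trivial centre. Two small points of hygiene: the stability of $N$ should be phrased via the cocharacter grading (equivalently the Chevalley commutator relations for the root groups) rather than Lie-algebra brackets alone, so that the argument is valid in positive characteristic; and the surjection you invoke is not literally $P_\alpha \twoheadrightarrow P_{_{\theta_{_\alpha}}}$, since these parabolics live in different groups ($G$ versus the reductive quotient of $\cG_{_{\theta_{_\alpha}},x}$) --- the correct statement is that the closed fibre of $\cG_{\alpha}$ at $x$ maps onto both, and the two pullbacks of the character $-\alpha_{_{max}}$ agree because they coincide on the common maximal torus and both kill the unipotent kernels.
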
 
\begin{proof} The first two assertions follow by an examination of the Bourbaki Tables. The third follows by \eqref{piside} after noting that $\mu(\alpha)$ is the singleton $\left\{ \alpha_{_{max}} \right\}$. By \eqref{hside}, the sheaf $\cT_h$, restricted to the fibers of $\pi$,  decomposes into line bundles of degrees given by $\alpha_{_{max}}^{\vee}(\beta)$ for $\beta \in \Phi^{-} - \Phi_{S - \alpha}$. By \eqref{calphabeta}, all these degrees are $-1$ except for $- \alpha_{_{max}}$ for which it is $-2$. Thus $R^0 \pi_* \cT_h=0$ and $R^1 \pi_* \cT_h=\cO$. The last assertion follows by an application of the  Leray spectral sequence.
\end{proof} 

\begin{Cor}\label{rationality}   By \eqref{key1}, \eqref{key2}, for all $G$, we have
\begin{equation}\label{irrationality}
H^i \left(\cM_{\alpha}, \cT_h \right) \simeq H^{i-1}\left(\cM_{\theta_{_{\alpha}}}, \cO \right) .
\end{equation}
\end{Cor}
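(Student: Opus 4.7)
The plan is to view Corollary \ref{rationality} as a canonical repackaging of the two preceding theorems. Theorems \ref{key1} and \ref{key2} between them cover every almost simple $G$: Theorem \ref{key1} handles all types with more than one hyperspecial vertex, and Theorem \ref{key2} handles $G_2, F_4, E_8$ by treating the unique simple root $\alpha$ attached to $\alpha_0$ in the extended Dynkin diagram. In both cases the conclusion is the (non-canonical) isomorphism $H^i(\cM_\alpha, \cT_h) \simeq H^{i-1}(\cM_{\theta_\alpha}, \cO)$. All that remains is to track the one place where a choice was implicitly made, namely the choice of uniformizer $z$ at $x$.

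The source of the twist is the description of the fiber roots of $\cT_h$ and $\cT_\pi$ from \eqref{hside}, \eqref{piside}, \eqref{mralpha}, \eqref{fthetaalpha}. The root spaces appearing in $\cT_h$ are canonical quotients of the form $U_r(A)/U_r(zA)$, which are canonically identified with the root groups $U_r(k)$, whereas the root spaces appearing in $\cT_\pi$ (in the key1 case indexed by $\mu(\alpha)$, in the key2 case the single root $\alpha_{_{max}}$) arise from quotients $U_r(z^{-1}A)/U_r(A)$. The residue pairing gives a canonical isomorphism $z^{-1}A/A \simeq \cT_{_{X,x}}$, so each such root space is canonically $U_r(k)\otimes \cT_{_{X,x}}$. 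Consequently, the key identification \eqref{piversush} used in the proof of Theorem \ref{key1} upgrades to a canonical isomorphism
\begin{equation*}
\cT_h \simeq \cT_\pi^{*}\otimes \cT_{_{X,x}}
\end{equation*}
of locally free sheaves on $\cM_\alpha$, and feeding this into \eqref{cotgt} together with the Leray spectral sequence for $\pi$ gives the claimed formula \eqref{irrationality} in the hyperspecial case. In the $G_2, F_4, E_8$ case of Theorem \ref{key2}, the same bookkeeping applied to the single root $\alpha_{_{max}}$ yields the same twist: the degree $-2$ line bundle on the fiber $\mathbb{P}^1 \simeq SL(\alpha_{_{max}})/B(\alpha_{_{max}})$ is canonically the relative cotangent of that $\mathbb{P}^1$, and its $H^1$ is canonically identified with $\cT_{_{X,x}}$ by Serre duality on the fiber together with the fact that the tangent line of the fiber is itself governed by $\alpha_{_{max}}$.

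The main (and essentially only) obstacle is the careful verification that the twist contributes exactly one factor of $\cT_{_{X,x}}$ and no higher power. This amounts to checking that in each theorem only a single parahoric jump of level one is involved in moving between the relevant quotients of root groups; this is transparent from the filtration formulas \eqref{mralpha}, \eqref{fthetaalpha}, where the levels are always $-1$, $0$, or $1$, and the residue pairing pins down the twist to be $\cT_{_{X,x}}^{\otimes 1}$. With this verification in hand the corollary follows immediately by invoking Theorem \ref{key1} or \ref{key2} as dictated by the type of $G$.
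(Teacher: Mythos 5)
Your proposal is correct and is essentially the paper's own argument: the corollary carries no separate proof beyond invoking Theorems \ref{key1} and \ref{key2}, and the content you supply --- the canonical identification $z^{-1}A/A \simeq \cT_{_{X,x}}$ via residues, upgrading \eqref{piversush} to a canonical isomorphism $\cT_h \simeq \cT_\pi^{*} \otimes \cT_{_{X,x}}$ and feeding it into \eqref{cotgt} and the Leray sequence for $\pi$ --- is exactly the implicit uniformizer bookkeeping that produces the twist. Your verification via \eqref{mralpha} and \eqref{fthetaalpha} that every relevant root space involves a single parahoric jump of level one (so the twist is $\cT_{_{X,x}}^{\otimes 1}$, including the $-\alpha_{_{max}}$ computation in the $G_2, F_4, E_8$ case) is the right check, and it is consistent with how \eqref{irrationality} is later globalized to $p^*\cT_{_X}$ in Theorem \ref{2.4}.
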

\begin{rem} In particular, the exceptional case mentioned in \eqref{tangentspacethm} and discussed in \eqref{exceptionalcases} do not occur for types $F_4$ or $E_8$, while for $G_2$, the exceptional case $\alpha_2$ is the one which is {\em not connected} to $\left\{ \alpha_{_{max}} \right\}$.
\end{rem}

\begin{thm} \label{2.4} Let $p: X \times \cM \ra X$ denote the first projection. Then we have
\begin{equation*}\label{thedefeqn}
R^i p_* \left(E(\gfr) \right) =
\begin{cases}
0 & \quad i \neq 1 \\
\cT_{_{X}} & \quad i=1.
\end{cases}
\end{equation*} 
In particular,  the Kodaira-Spencer map $\rho_X$ of \eqref{nrdef} is an isomorphism in degree $1$.
\end{thm}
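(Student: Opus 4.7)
The strategy is to compute both sides stalkwise on $X$ and then globalize. By the projection formula, the stalk at $x \in X$ of $R^{i-1}p_*(p^*\cT_X)$ is $\cT_{X,x} \otimes H^{i-1}(\cM, \cO_\cM)$, while the stalk of $R^ip_*(E(\gfr))$ is $H^i(\cM, E_x(\gfr))$. The target reduction is therefore a canonical isomorphism
$$H^i(\cM, E_x(\gfr)) \simeq H^{i-1}(\cM, \cO_\cM) \otimes \cT_{X,x},$$
constructed by traversing both arrows of the Hecke diagram \eqref{heckecorresp}.

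Along $h: \cM_\alpha \to \cM$, the description of $\cT_h$ supplied in the proof of \ref{key1}, combined with Tits' theorem \eqref{tgt}, gives $R^0h_*\cT_h \simeq E_x(\gfr)$ and vanishing of higher direct images (the exceptional cases $G_2, F_4, E_8$ being treated by \ref{key2}). The Leray spectral sequence for $h$ then collapses to yield $H^i(\cM, E_x(\gfr)) \simeq H^i(\cM_\alpha, \cT_h)$, and Corollary \ref{rationality} identifies the latter with $H^{i-1}(\cM_{\theta_\alpha}, \cO) \otimes \cT_{X,x}$. To remove the $\cM_{\theta_\alpha}$ from the right-hand side, I apply Leray once more, first to $\pi: \cM_\alpha \to \cM_{\theta_\alpha}$ and then to $h$, with constant coefficients $\cO$: since both sets of fibers are projective rational homogeneous spaces, all higher direct images of $\cO$ vanish, giving $H^{i-1}(\cM_{\theta_\alpha}, \cO) \simeq H^{i-1}(\cM_\alpha, \cO) \simeq H^{i-1}(\cM, \cO_\cM)$. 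Concatenating these canonical identifications produces the required stalk isomorphism, and running the entire Hecke construction in a family over $X$ (allowing the marked point to vary) globalizes it to the sheaf isomorphism $R^ip_*(E(\gfr)) \simeq R^{i-1}p_*(p^*\cT_X)$.

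For the Kodaira-Spencer statement, set $i = 1$; invoking $H^0(\cM, \cO_\cM) = k$ (connectedness of $\cM$) gives $R^1p_*(E(\gfr)) \simeq \cT_X \otimes \cO_X \simeq \cT_X$. The map $\rho_G$ was shown to be injective in \ref{nrdef}, and an injection between invertible sheaves of equal degree on a smooth projective curve is forced to be an isomorphism. The principal technical hurdle is the naturality check required to turn stalkwise identifications into a sheaf-theoretic statement: one must verify that the tangent-cotangent identification $\cT_h \simeq \cT_\pi^*$ of \ref{key1} and \ref{key2}, together with the extra $\cT_{X,x}$ factor recorded in Corollary \ref{rationality}, depend functorially on the basepoint when the parahoric and Hecke constructions are carried out relative to $X$.
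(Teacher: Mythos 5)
Your proposal is correct and follows essentially the same route as the paper's proof: a stalkwise check at each $x \in X$ via the Hecke diagram, using Leray along $h$ together with \eqref{tgt}, then Corollary \ref{rationality}, then the identifications $H^{i-1}(\cM_{_{\theta_{_\alpha}}}, \cO) \simeq H^{i-1}(\cM_{\alpha}, \cO) \simeq H^{i-1}(\cM, \cO)$ across the rational homogeneous fibrations, and finally injectivity from Theorem \ref{nrdef} plus a rank count for the Kodaira--Spencer assertion. Your added care about globalizing the stalkwise identification over $X$ and the equal-degree line-bundle argument simply makes explicit what the paper compresses into ``checking at points $x \in X$'' and ``a dimension check''.
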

\begin{proof} 
  
We begin by computing  the dimension of the fibers of  $R^i p_* \left(E(\gfr) \right)$. By \eqref{ModStackasE(G/P)}, we may apply Theorem \ref{tangentspacethm} to the fibration $h: \cM_{\alpha} \ra \cM$. By the Leray spectral sequence, applying Theorem \ref{tangentspacethm} first for $j=0$ and then for $j>0$ for any $x \in X$ we have
\begin{eqnarray}
\text{H}^i \left(\cM, E_x(\gfr) \right) \stackrel{\eqref{tgt}} = \text{H}^i \left(\cM, h_*(\cT_h) \right)  \stackrel{\eqref{tgt}} = \text{H}^i \left( \cM_{\alpha}, \cT_h \right)  \stackrel{\eqref {irrationality}} \simeq \text{H}^{i-1} \left( \cM_{_{\theta_{_\alpha}}}, \cO \right).
\end{eqnarray} 
Note that $\text{H}^{i-1} \left( \cM_{_{\theta_{_\alpha}}}, \cO \right) \simeq \text{H}^{i-1} \left( \cM_{\alpha}, \cO \right) \simeq \text{H}^{i-1} \left( \cM, \cO \right)$ since the morphisms in the Parahoric-correspondence are $G/P$-type fibrations.
 
We claim that Borel-Weil-Bott for the stack $\cM$ and the line bundle $\cO$ holds. Let 
$L_{X^\circ}(G)$ denote the ind-scheme parametrising regular maps from $X^{\circ}:=X - \{x \}$ to $G$.
Let $$ \mathcal{F} \ell:=LG/L^+{G} \rightarrow \mathcal{M}= \mathcal{F} \ell/L_{_{X^\circ}}(G) $$ denote the affine flag variety serving as an atlas. Then we have a spectral sequence $$E^{^{pq}}=H^{^p} \left(L_{_{X^\circ}}(G), H^q \left( \mathcal{F} \ell, \mathcal{O} \right) \right) \implies H^{^n}(\mathcal{M},\mathcal{O}).$$ This immediately gives $\text{H}^{^{i}}(\mathcal{M},\mathcal{O}) =k$ for $i=0$ since the affine flag variety $\mathcal{F} \ell$ is ind-proper by \cite{pradv}. For higher $i$, one has to check that $H^{^p} \left(L_{_{X^\circ}}(G), \mathcal{O} \right)$ vanishes for higher $p$. The  argument in \cite[Prop 6.1.1 (1)]{me} in the analytic setup generalises: one writes $$L_{_{X^\circ}}(G)= \varinjlim Y_{_n}$$ as a direct limit of affine schemes $Y_{_n}$ (see \cite{bwb} and \cite[Prop 6.0.3]{me}) where each morphism $Y_{_n} \hookrightarrow Y_{_{n+1}}$ is a closed immersion. The arguments in the proof of \cite[Prop 5.0.1]{me} show that the natural map $$\Gamma(\mathbb{N}): Ab(\mathcal{F} \ell) \rightarrow Func(\mathbb{N},Ab)$$ from the category $Ab(\mathcal{F} \ell)$ of abelian sheaves on $\mathcal{F} \ell$ to the category of contravariant functors $Func(\mathbb{N},Ab)$ from the category $\mathbb{N}$ to abelian groups $Ab$ admits a functorial and exact left-adjoint. Thus $\Gamma(\mathbb{N})$ maps injectives to injectives. Therefore, for any abelian sheaf on any Grothendieck site of $L_{_{X^\circ}}(G)$ one gets the following Grothendieck spectral sequence 
$$R^{^p} \varprojlim_n H^{^q} \left(Y_{_n}, \mathcal{F} \right) \implies H^{^*} \left(L_{_{X^\circ}}(G), \mathcal{F} \right).$$
For $q \geq 1$, the groups $H^{^q} \left(Y_{_n}, \mathcal{O} \right)$ vanish. For $q=0$, the inverse system $\cdots \leftarrow H^0 \left(Y_{_n},\mathcal{O} \right) \leftarrow H^0 \left(Y_{_{n+1}},\mathcal{O} \right) \leftarrow \cdots$ is surjective on each arrow and hence satisfies Mittag-Leffler conditions proving that its higher $\varprojlim$ is zero.

Thus by BWB for the pair $(\mathcal{M},\mathcal{O})$ we get  $\text{H}^{i-1} \left( \cM, \cO \right) = 1$ for $i =1$ and $0$ elsewhere and the result follows for $i \neq 1$. Moreover, the dimension of the fibers of  $R^1 p_* \left(E(\gfr) \right)$ becoming to $1$ together with \eqref{nrdef} shows $\rho_X$  is an isomorphism since it is already injective by \eqref{nrdef}.
\end{proof}
\begin{prop} We have
\begin{equation}\label{thenumbers1}
\text{H}^i \left(X \times \cM, E(\gfr) \right) = \text{H}^{i-1} \left(X, \cT_{_X}\right).
\end{equation} 
\end{prop}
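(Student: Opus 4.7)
The plan is to deduce the proposition from Theorem \ref{2.4} via the Leray spectral sequence for the projection $p: X \times \cM \to X$. The key observation is that $X$ is a curve, so $\text{H}^j(X, -) = 0$ for $j \geq 2$ on any quasi-coherent sheaf. Therefore for any sheaf $\cF$ on $X \times \cM$ the Leray spectral sequence $E_2^{j,i} = \text{H}^j(X, R^i p_* \cF) \Rightarrow \text{H}^{j+i}(X \times \cM, \cF)$ is concentrated in the two rows $j = 0, 1$ and hence automatically degenerates at $E_2$. For each $n$ this yields a short exact sequence
$$0 \to \text{H}^1(X, R^{n-1} p_* \cF) \to \text{H}^n(X \times \cM, \cF) \to \text{H}^0(X, R^n p_* \cF) \to 0.$$

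Applying this with $\cF = E(\gfr)$ and substituting the isomorphism $R^i p_* E(\gfr) \simeq R^{i-1} p_*(p^* \cT_X)$ supplied by Theorem \ref{2.4} gives
$$0 \to \text{H}^1(X, R^{n-2} p_*(p^* \cT_X)) \to \text{H}^n(X \times \cM, E(\gfr)) \to \text{H}^0(X, R^{n-1} p_*(p^* \cT_X)) \to 0,$$
which is precisely the short exact sequence produced by Leray applied instead to $p^* \cT_X$ in total degree $n - 1$:
$$0 \to \text{H}^1(X, R^{n-2} p_*(p^* \cT_X)) \to \text{H}^{n-1}(X \times \cM, p^* \cT_X) \to \text{H}^0(X, R^{n-1} p_*(p^* \cT_X)) \to 0.$$
Matching the outer terms forces the middle terms to be isomorphic, which is the asserted identity.

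For the cleanest derivation, I would prefer to upgrade Theorem \ref{2.4} to a quasi-isomorphism $Rp_* E(\gfr) \simeq Rp_*(p^* \cT_X)[1]$ in the derived category of $X$; applying $R\Gamma(X,-)$ would then produce the degree shift on total cohomology in a single line. The main technical point in either approach is to verify that the stalk-wise identification built in the proof of Theorem \ref{2.4} is natural, i.e.\ induced by a canonical morphism of complexes, so that the comparison of the two Leray short exact sequences above is genuinely compatible. Since the identification rests on the canonical projection formula $R^{i-1} p_*(p^* \cT_X) \simeq \cT_X \otimes R^{i-1} p_* \cO_{X \times \cM}$ together with the canonical Hecke data of \eqref{heckecorresp}, this naturality should not pose a serious obstacle.
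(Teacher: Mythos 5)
Your argument is correct and takes essentially the same route as the paper: the paper likewise compares the Leray spectral sequences of $p$ for $E(\gfr)$ and for $p^{*}\cT_{_X}$ and identifies their $E_2$-pages term by term via Theorem \ref{2.4}, concluding that "the result follows." Yours is in fact the more careful rendition, since you make explicit the two-row degeneration coming from $\mathrm{H}^{j}(X,-)=0$ for $j\geq 2$, and because all groups in sight are $k$-vector spaces the resulting short exact sequences split, so matching the outer terms legitimately yields the isomorphism of the middle terms without the derived-category naturality you raise (which would only be needed to make the isomorphism canonical rather than merely existent).
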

\begin{proof} This is immediate by considering the first projection $p: X \times \cM_{\alpha} \ra X$ and the degenerate  Leray spectral sequence, together with \eqref{thedefeqn}.
\end{proof}

\begin{thm}\label{main1} Let the characteristic $\tt p$ of $k$ be coprime to the order of the centre of $G$ and the coefficients of the highest root.
Let $\cM^s \subset \cM$ denote the open locus of stable torsors. {\em For $g \geq  4$, for all $G \neq \text{SL}(2)$, and when $G = \text{SL}(2)$, for $g \geq 5$}, we have a canonical isomorphism
\begin{equation}\label{thedefnos}
\kappa:\text{H}^i \left(X, \cT_{_X} \right) \simeq \text{H}^i \left(\cM^{^s}, \cT_{_\cM} \right)  \quad \text{for} \quad i=0,1, 2.
\end{equation}
In particular, for $i = 0,2$ the group $H^i \left(\cM^{^s}, \cT_{_{\cM}} \right)$  vanishes. For $i = 1$, the map $\kappa$ is the Kodaira-Spencer map associated to the deformations of the moduli stack $\cM^{^s}$. 
\end{thm}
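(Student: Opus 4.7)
The plan is to assemble two Leray spectral sequences and then apply a Künneth/projection-formula decomposition. First, apply Leray to $q\colon X\times\cM^s\to\cM^s$ with coefficients $E(\gfr)$. For a stable torsor $E_t$, the hypothesis that ${\tt p}$ is coprime to the order of the centre of $G$ ensures that $\Aut(E_t)$ is a finite \'etale group scheme with vanishing Lie algebra, so $H^0(X,E_t(\gfr))=0$; hence $R^0 q_* E(\gfr)=0$ on $\cM^s$. Since $X$ is a curve, $R^j q_*=0$ for $j\geq 2$, so only $R^1 q_* E(\gfr)=\cT_{_\cM}|_{\cM^s}$ remains, and the spectral sequence degenerates to
\[
H^i(X\times\cM^s,E(\gfr))\simeq H^{i-1}(\cM^s,\cT_{_\cM}).
\]
Comparing with Proposition \eqref{thenumbers1} restricted to $\cM^s$ yields the isomorphism $\kappa$ for every $i$.

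Next, compute $H^j(X\times\cM^s,p^*\cT_{_X})$ by the second Leray for $p\colon X\times\cM^s\to X$. The projection formula delivers the Künneth decomposition
\[
H^j(X\times\cM^s,p^*\cT_{_X})=\bigoplus_{a+b=j}H^a(X,\cT_{_X})\otimes H^b(\cM^s,\cO).
\]
For $g\geq 2$, $H^0(X,\cT_{_X})=0$ and $H^2(X,\cT_{_X})=0$, so for $j\leq 2$ only the summand $H^1(X,\cT_{_X})\otimes H^{j-1}(\cM^s,\cO)$ can contribute. Invoking the two classical inputs $H^0(\cM^s,\cO)=k$ (connectivity of $\cM^s$) and $H^1(\cM^s,\cO)=0$, valid under the stated genus assumptions, the display collapses to $H^i(X,\cT_{_X})$ for $i=0,1$ and to $0$ for $i=2$. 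Combined with $\kappa$, this gives $H^i(\cM^s,\cT_{_\cM})\simeq H^i(X,\cT_{_X})$ for $i=0,1$ and $H^2(\cM^s,\cT_{_\cM})=0=H^2(X,\cT_{_X})$.

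Finally, we pass from $\cM^s$ to the full smooth stack $\cM$. Under the hypothesis $g\geq 3$ for $G\neq\SL(2)$ and $g\geq 5$ for $G=\SL(2)$, the complement $\cM\setminus\cM^s$ has codimension at least three, so the standard local-cohomology depth estimate for the locally free sheaf $\cT_{_\cM}$ on the smooth stack $\cM$ yields $H^i(\cM,\cT_{_\cM})\simeq H^i(\cM^s,\cT_{_\cM})$ for $i\leq 2$. The identification of $\kappa$ in degree one with the Kodaira--Spencer map of the family $\{\cM_X(G)\}$ as the curve $X$ varies is then transparent: by construction, $\kappa$ factors through the morphism $\rho_G\colon\cT_{_X}\to R^1 p_* E(\gfr)$ of Theorem \ref{2.4}, which is itself the fibrewise Kodaira--Spencer map of the universal $G$-torsor $E\to X\times\cM$.

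The hard part is the pair of inputs feeding the last step: the vanishing $H^1(\cM^s,\cO)=0$, and the sharp codimension estimate on $\cM\setminus\cM^s$ that forces the threshold $g\geq 5$ for $\SL(2)$. The codimension bound is essentially the Narasimhan--Ramanan stratification estimate, and verifying it together with the $\cO$-cohomology vanishing in positive characteristic under the mild constraint on ${\tt p}$ is the delicate technical ingredient; everything else is spectral-sequence bookkeeping around the isomorphism of Theorem \ref{2.4}.
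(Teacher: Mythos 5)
Your proposal follows essentially the same route as the paper: Leray for $q$ with $R^0q_*E(\gfr)=0$ on the stable locus and $R^{\geq 2}q_*=0$ by relative dimension, comparison with Proposition \eqref{thenumbers1} to produce $\kappa$, a K\"unneth/projection-formula collapse using $H^0(X,\cT_{_X})=0$ and $H^1(\cO)=0$, and finally a Hartogs-type extension from $\cM^s$ to $\cM$. Two of your inputs are stated more casually than the paper proves them: for $R^0q_*E(\gfr)=0$ in characteristic $\tt p$, the paper does not simply assert \'etaleness of $\Aut(E_t)$ but controls automorphisms of stable, non-regularly-stable torsors via centralisers of the torsion elements $g_{_{\alpha}}$ from the Borel--de Siebenthal list, where the coprimality hypothesis enters (note also that for $\tt p$ dividing the centre, $H^0(E_t(\gfr))$ can be nonzero for Lie-algebra reasons alone, e.g.\ central scalars in $\mathfrak{sl}_n$ when ${\tt p}\mid n$, so this hypothesis is doing real work); and the vanishing $H^1(\cM^s,\cO)=0$, which you label a ``classical input,'' is justified in the paper by discreteness of the Picard group of $\cM$.

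There is one concrete error in your Hartogs step: codimension \emph{at least three} does not give $H^i(\cM,\cT_{_\cM})\simeq H^i(\cM^s,\cT_{_\cM})$ for $i\leq 2$. For a locally free sheaf on a smooth stack with closed complement $Z$ of codimension $c$, local cohomology vanishes in degrees $<c$, and the excision sequence then gives restriction isomorphisms only for $i\leq c-2$ (with mere injectivity in degree $c-1$); with $c=3$ you would get degrees $0,1$ only, and the key vanishing $H^2(\cM,\cT_{_\cM})=0$ would not be established. You need $c\geq 4$, which is exactly what the paper extracts from Faltings' estimate: the codimension is bounded below by $(g-1)\big(\dim(G)-\dim(P)\big)\geq (g-1)\,\lvert\Phi^{-}-\Phi_{_{S-\alpha}}\rvert$, which is $\geq 2(g-1)\geq 4$ for $g\geq 3$ and $G\neq \SL(2)$, and $\geq(g-1)\geq 4$, i.e.\ $g\geq 5$, for $G=\SL(2)$. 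Indeed the genus thresholds you quote are calibrated precisely to force codimension $4$, so your ``at least three'' is inconsistent with your own hypotheses; replacing it by the Faltings bound repairs the argument and brings it in line with the paper's proof.
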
 
\begin{proof} Let us restrict to the stable locus $\cM^s$ and consider the second projection $q:X \times \cM^s \to \cM^s$. Here we have
\begin{equation}
R^i q_* \left(E(\gfr) \right) =
\begin{cases}\label{justfornow}
0 & \quad i =0~$\&$~ i \geq 2 \\
\cT_{_{\cM^s}} & \quad i=1.
\end{cases}
\end{equation} 
The cases $i \geq 2$ holds because the dimension of the fiber of $q$ is one. The case $i = 1$ follows by definition. 

In characteristic zero, the case $i=0$ follows by {\em stability} of the family $\left\{E_t \right\}_{_{t \in \cM^s}}$ since $H^0(E_t(\gfr))=0$ when $G$ is semisimple. In positive characteristics, we need to exercise caution, especially since Lie algebras do not capture the complexity. Observe that the automorphism groups of stable, non-regularly stable torsors are finite  and viewing the automorphisms as a subgroup of $G$ under evaluation at the base point $x \in X$, one sees that their centralisers in $G$ are semisimple. It is easy to check that one gets a  reduction of structure group of these stable bundles to these semisimple subgroups (see for example \cite[Prop 2.4]{bbn2005}). Moreover, groups arise from the Borel-de Seibenthal list of maximal rank subgroups of $G$, and by our assumptions on the characteristic $\tt p$, we avoid small characteristics we land in a situation where characteristic zero notions work for us.

By  Biswas-Hoffmann  \cite[Proof of Lemma 2.1]{BHf1}  (see also Faltings \cite[Theorem II.6]{faltings1993}), the  codimension of the complement of the stable locus in the stack $\cM$ is bounded below by $(g-1)\left(\text{dim}(G) - \text{dim}(P)\right)$, for $P \subset G$ running over  parabolic subgroups of $G$. Hence the required codimension is bounded below by 
\begin{equation} \label{dimensionestimation}  (g - 1)\left(\text{dim}(G) - \text{dim}(P)\right) \stackrel{\eqref{hside}} \geq (g-1) \mid \Phi^{-} - \Phi_{_{S - \alpha}}\mid
\end{equation} for  maximal parabolics $P_\alpha$.  If $G \neq \text{SL}(2)$, this is at least $2(g-1)$ and  hence, for $g \geq 3$, this is codimension is $\geq 4$. Else, when we allow $\text{SL}(2)$, we impose $g \geq 5$ and the lower bound is $\geq 4$ again.  

By using the well-known theorem (of Hartogs-type) on
extendability of cohomology classes for $E(\gfr)$ and the inclusion $X \times \cM^{^s} \subset X \times \cM$, and the Leray spectral sequence and \eqref{justfornow}, we conclude (for $i = 0,1,2$), 
\begin{equation}\label{thebigdef}
\text{H}^i(\cM^s,\cT_{_{\cM}}) = \text{H}^{i+1} \left( X \times \cM^s, E(\gfr) \right)  = \text{H}^{i+1} \left( X \times \cM, E(\gfr) \right) \stackrel{\eqref{thenumbers1}} = \text{H}^i(X,\cT_{_X}).
\end{equation}

 

\end{proof} 
\begin{Cor}
Let $\cM^{^s}_{_{\theta_{_\alpha}}}$ be the stack of torsors for a maximal hyperspecial parahoric group scheme $\cG_{_{\theta_{_\alpha}}}$. Then with same hypothesis on $g$, we have:
\begin{equation}
\text{h}^i \left(\cM^{^s}_{_{\theta_{_\alpha}}}, \cT_{_{\cM_{_{\theta_{_\alpha}}}}} \right) = \begin{cases} 0 & i=0,2, \\
3g-3 & i=1,
\end{cases}
\end{equation}
In particular, it holds for the moduli stack of vector bundles of rank $n$ with fixed determinant. Moreover, if the degree is co-prime to $n$, then semistable bundles are in fact {\em regularly stable}, and we get the desired dimensions for the fine moduli space which is smooth and projective. This is the main theorem of \cite{nr} (see also \cite{Hitchin1987}).
\end{Cor}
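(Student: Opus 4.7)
The plan is to apply the strategy of Theorem \ref{main1} with $\cM$ replaced by $\cM_{_{\theta_{_\alpha}}}$. The Hecke diagram \eqref{heckecorresp} presents $\cM_\alpha$ as a flag bundle over both $\cM$ and $\cM_{_{\theta_{_\alpha}}}$; the duality $\cT_\pi^{*} \simeq \cT_h$ from Theorem \ref{key1} (together with \eqref{cotgt}) means every cohomological ingredient used for $h$ in the proof of Theorem \ref{main1} has a symmetric counterpart for $\pi$, and it is this symmetry that drives the whole computation.

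The first step is to establish the parahoric analogue of Theorem \ref{2.4}: for the first projection $p: X \times \cM_{_{\theta_{_\alpha}}} \to X$ one needs
\begin{equation*}
R^{i} p_{*}\,\cE_{_{\theta_{_\alpha}}}(\mathfrak g_{_{\theta_{_\alpha}}}) \simeq R^{i-1} p_{*}\big(p^{*}\cT_{_X}\big).
\end{equation*}
I would pull back along $1 \times \pi$ to $X \times \cM_\alpha$ and use $R\pi_{*}\cO_{\cM_\alpha} = \cO_{\cM_{_{\theta_{_\alpha}}}}$ (since $\pi$ is a flag bundle over a generalised flag variety) together with the Lie-algebra identifications coming from the Neron dilatation picture described in \eqref{hecke}; exchanging the roles of $h$ and $\pi$, the duality \eqref{piversush} reduces the claim to the already-established Theorem \ref{2.4} on $\cM$.

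The second step runs the argument of Theorem \ref{main1} verbatim over the stable locus $\cM_{_{\theta_{_\alpha}}}^{s}$. Stability of the family $\{\cE_{_{\theta_{_\alpha}},t}\}$ kills $R^{0}q_{*}\cE_{_{\theta_{_\alpha}}}(\mathfrak g_{_{\theta_{_\alpha}}})$---the automorphism groups of stable parahoric torsors modulo the centre are again centralisers of Borel--de Seibenthal elements $g_{_{\alpha}}$ and so are finite under our characteristic hypothesis---while higher degrees vanish by dimension, leaving $R^{1}q_{*}\cE_{_{\theta_{_\alpha}}}(\mathfrak g_{_{\theta_{_\alpha}}}) = \cT_{_{\cM_{_{\theta_{_\alpha}}}}}$. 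Combining the two Leray spectral sequences as in \eqref{thenumbers1} gives
\begin{equation*}
\text{H}^{i}\big(\cM_{_{\theta_{_\alpha}}}^{s}, \cT_{_{\cM_{_{\theta_{_\alpha}}}}}\big) \simeq \text{H}^{i}\big(X \times \cM_{_{\theta_{_\alpha}}}^{s}, p^{*}\cT_{_X}\big).
\end{equation*}
The identities $R\pi_{*}\cO=\cO$ and $Rh_{*}\cO=\cO$ give $\text{H}^{\bullet}(\cM_{_{\theta_{_\alpha}}}, \cO) = \text{H}^{\bullet}(\cM, \cO)$, so in particular $\text{H}^{1}(\cM_{_{\theta_{_\alpha}}}^{s}, \cO)=0$ (the Picard group is discrete), and K\"unneth with $\text{H}^{i}(X, \cT_{_X}) = 0, 3g-3, 0$ for $i=0,1,2$ produces the claimed dimensions. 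Faltings's codimension bound applies unchanged to $\cM_{_{\theta_{_\alpha}}}$---the complement of the stable locus has codimension $\geq (g-1)(\dim G - \dim P_\alpha) \geq 4$ under our hypothesis on $g$---so Hartogs extends the computation from $\cM_{_{\theta_{_\alpha}}}^{s}$ to $\cM_{_{\theta_{_\alpha}}}$ in degrees $\leq 2$.

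For the concluding $\text{SL}(n)$ statement, when the chosen $\theta_\alpha$ corresponds to a degree $d$ coprime to $n$, semistability equals stability and every stable bundle is regularly stable (only the centre $\mu_n$ stabilises each point), so $\cM_{_{\theta_{_\alpha}}}$ is a $\mu_n$-gerbe over the smooth projective fine moduli space and the tangent-bundle cohomology descends to recover exactly the Narasimhan--Ramanan dimensions. The main obstacle is the first step: honestly identifying $\cE_{_{\theta_{_\alpha}}}(\mathfrak g_{_{\theta_{_\alpha}}})$ with the correct pushforward from $\cM_\alpha$ demands careful bookkeeping of the Lie algebras of $G$, $\cG_\alpha$, and $\cG_{_{\theta_{_\alpha}}}$ under the Neron dilatations of \eqref{hecke}, since the adjoint of the universal $\cG_{_{\theta_{_\alpha}}}$-torsor is not literally the pullback of the adjoint of the universal $G$-torsor.
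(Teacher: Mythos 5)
Your proposal is correct and follows essentially the same route as the paper, whose proof is precisely the instruction to ``reverse the roles of the maps $h$ and $\pi$'' in Theorem \ref{key1} and rerun the chain through Theorem \ref{2.4}, the Leray arguments, stability, and the Faltings--Hartogs step of Theorem \ref{main1} --- you have simply written out the details that the paper leaves implicit, including the duality $\cT_\pi^* \simeq \cT_h$, the identity $H^{\bullet}(\cM_{_{\theta_{_\alpha}}},\cO) = H^{\bullet}(\cM,\cO)$, and the K\"unneth computation. Your closing identification of fixed-determinant bundles of degree $d$ coprime to $n$ with regularly stable parahoric torsors of type $\alpha_d$ matches the paper's citation of \cite{bs} and \cite[\S 10.2]{me} as well.
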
 
\begin{proof} The proof is gotten by reversing the process carried out for $\cM$. More precisely, we reverse the roles of the maps $h$ and $\pi$ in the situation of \ref{key1} to obtain the isomorphism $H^i \left(\cM_{\alpha}, \mathcal{T}_{_{\pi}} \right) \simeq H^{i-1} \left(\mathcal{M},\mathcal{O} \right)$. Now \ref{2.4} and \ref{thenumbers1} generalise for $\mathcal{M}_{\theta_{\alpha}}$. The generalisation of the dimension estimation \eqref{dimensionestimation} is provided in \cite[Lemma 7.7.2 and 7.7.3]{pp} by analysing the case $G \neq SL_2$ and $G=SL_2$ separately for (loc. cit.) Lemma 7.7.3. Thus \ref{main1} generalises to $\mathcal{M}_{\theta_{\alpha}}$ as well. Finally, note that vector bundles of rank $n$ and determinant of degree $-d$, where $0 \leq d < n$, correspond precisely to the moduli space of parahoric torsors with parahoric structure at a base point of type given by the simple root $\alpha=\alpha_d$ (see \cite{bs} and  \cite[\S 10.2]{me}).   \end{proof}

\section{The cotangent bundle and its cohomology}
Let $$\Omega_{_{\cM}} := \cT_{_{\cM}}^*.$$

\begin{thm}\label{main2} Let the genus and characteristics assumptions be as in \ref{main1}. Then we have the following:
\begin{equation}\label{thedefnos1}
\text{H}^j \left(\cM^{^s}, \Omega_{_{\cM}} \right) = \text{H}^{j-1} \left(X, \cO_X \right ) 
\end{equation}
for $j = 0,1,2$.
\end{thm}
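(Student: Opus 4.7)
The plan is to mirror the strategy of \ref{main1}, converting the cotangent computation to a tangent one via relative Serre duality on $q:X \times \cM \to \cM$. First I would work on the stable locus $\cM^s$, where the Killing form identifies $E(\gfr) \simeq E(\gfr)^{\vee}$ (the characteristic hypothesis ensures non-degeneracy). Relative Serre duality for $q$ (relative dimension one, relative dualizing sheaf $p^*\omega_X$) then gives
\[
\Omega_{_{\cM}} = \cT_{_{\cM}}^{*} = \bigl(R^1 q_* E(\gfr)\bigr)^{\vee} \simeq q_*\bigl(E(\gfr) \otimes p^*\omega_X\bigr),
\]
while $R^1 q_*\bigl(E(\gfr) \otimes p^*\omega_X\bigr) = 0$ on $\cM^s$, dual to the vanishing $q_* E(\gfr) = 0$ used in \ref{main1}. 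The Leray spectral sequence for $q$ therefore collapses to
\[
\text{H}^j(\cM^s, \Omega_{_{\cM}}) \simeq \text{H}^j\bigl(X \times \cM^s, E(\gfr) \otimes p^*\omega_X\bigr).
\]

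Next I would apply Leray for $p: X \times \cM^s \to X$. By the projection formula and \ref{2.4},
\[
R^k p_*\bigl(E(\gfr) \otimes p^*\omega_X\bigr) \simeq R^k p_* E(\gfr) \otimes \omega_X \simeq \text{H}^{k-1}(\cM, \cO) \otimes \cT_X \otimes \omega_X \simeq \text{H}^{k-1}(\cM, \cO) \otimes \cO_X,
\]
which vanishes for $k = 0$ and equals $\cO_X$ for $k = 1$, using $\text{H}^0(\cM, \cO) = k$ and $\text{H}^1(\cM, \cO) = 0$ (because $\text{Pic}(\cM)$ is discrete, exactly as used in \ref{main1}).

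The remaining step is routine bookkeeping: for $j \leq 2$, the $E_2$-page of the Leray spectral sequence for $p$ has at most one nonzero term on each diagonal $i + k = j$, so no nontrivial differentials can act in this range, and one reads off
\[
\text{H}^j\bigl(X \times \cM^s, E(\gfr) \otimes p^*\omega_X\bigr) \simeq \text{H}^{j-1}(X, \cO_X).
\]
Finally, since $\Omega_{_{\cM}}$ is locally free on $\cM^s$ and its complement has codimension at least $4$ under the hypotheses on $g$ stated in \ref{main1}, Hartogs gives $\text{H}^j(\cM, \Omega_{_{\cM}}) \simeq \text{H}^j(\cM^s, \Omega_{_{\cM}})$ for $j \leq 2$, which yields \eqref{thedefnos1}.

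The main obstacle I foresee is justifying the self-duality $E(\gfr) \simeq E(\gfr)^{\vee}$ in positive characteristic, since the Killing form can degenerate at certain small primes. A safe fallback, if needed, is to employ an invariant non-degenerate symmetric form on $\gfr$ obtained from a faithful representation $G \hookrightarrow \text{SL}(W)$ (already used in the proof of \ref{nrdef}), and then apply Grothendieck duality for the family $q$ directly to express $\Omega_{_{\cM}}$ as a pushforward twisted by the relative dualizing sheaf; the subsequent computation is unchanged.
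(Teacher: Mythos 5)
Your proposal is correct and follows essentially the same route as the paper: self-duality of $E(\gfr)$ plus Serre duality along the fibres of $q$ to identify $\Omega_{_{\cM}}$ with $q_*\bigl(E(\gfr)\otimes p^*K_X\bigr)$ and kill the higher direct images on $\cM^s$, then Leray for $q$ and for $p$ combined with \eqref{2.4} and the projection formula, and finally Hartogs with the codimension bound from \eqref{main1}. The only differences are cosmetic --- you phrase the fibrewise Serre duality as relative Grothendieck duality for $q$, and you explicitly flag the small-characteristic degeneration of the Killing form (with the trace-form fallback), a point the paper absorbs into its standing hypothesis on $\tt p$.
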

\begin{proof} Since $E_t(\gfr)$ is self-dual, by Serre duality on $X$, we have:
\[
\text{H}^1\left(X, E_t(\gfr) \right)^* \simeq \text{H}^{0} \left(X, K_X \otimes E_t(\gfr) \right),
\]
\[
\text{H}^0 \left(X, E_t(\gfr) \right)^* \simeq \text{H}^{1} \left(X, K_X \otimes E_t(\gfr) \right)
\]
where $K_X$ is the canonical bundle on $X$.
Consider the two projections $p:X \times \cM \to X$ and $q:X \times \cM \to \cM$ and again restrict to the stable locus $\cM^s$. Then Serre duality now gives the following isomorphism:
\begin{equation}
R^{^j}q_* \left( p^* \left(K_X \right) \otimes E(\gfr) \right) 
 =
\begin{cases}
0 & \quad j > 0 \\
\Omega_{_{\cM}} & \quad j = 0.
\end{cases}
\end{equation}
Therefore, by the Leray spectral sequence for $q$, we get the identification:
\begin{equation}
\text{H}^j \left(\cM^s, \Omega_{_{\cM}} \right) = \text{H}^j\left(X \times \cM^s, p^*(K_X) \otimes E(\gfr)\right).
\end{equation}
By the well-known theorem (of Hartogs-type) with the genus bounds as in \ref{main1}, we have:
\begin{equation}\label{herehartogs}
\text{H}^j\left(X \times \cM^s, p^*(K_X) \otimes E(\gfr)\right) = \text{H}^j\left(X \times \cM, p^*(K_X) \otimes E(\gfr)\right)
\end{equation}
for $j = 0,1,2$.

On the other hand, by the Leray spectral sequence for $p$ together with \ref{2.4} and the projection formula, (when char $\tt p$ has bounds as in \ref{main1}), we get:
\begin{equation} 
\text{H}^j\left(X \times \cM, p^*(K_X) \otimes E(\gfr)\right) = \text{H}^{j-1}(X, K_X \otimes \cT_X) = \text{H}^{j-1}(X, \cO_X).
\end{equation}
for all $j$. 
Thus, we deduce by \eqref{herehartogs} that $\text{H}^j(\cM^s, \Omega_{_{\cM}}) = \text{H}^{j-1}(X, \cO_X)$ for $j = 0,1,2$. 


\end{proof}

\section{For $G$ quasi-split}
In this section, we generalise \S \ref{hecke} to the quasi-split case. We do so by quoting relevant results from \cite{bt}. The final answer does not change much from the split case since our residue fields are algebraically closed.

\subsection{From Bruhat-Tits \cite{bt}} We now proceed to generalise the above to the quasi-split case. We will use notations as in \cite[\S 4]{bt}. Thus $K$ denotes a valued commutative field of infinite cardinality. Let $G$ denote a connected quasi-split reductive group defined over $K$ and let $S \subset T$ denote its maximal split torus. Since $G$ is quasi-split, the maximal torus $T$ coincides with the centraliser of $S$. The roots of $G$ with respect to $S$ are the non-zero weights of $S$ in the adjoint representation of $G$ in its Lie algebra. A {\it radicial ray} of $G$ with respect to $S$ is an open half-line with origin $0$ in $E= \mathbb{R} \otimes X^*(S)$ containing at least one root. The set of radicial rays of $G$ with respect to $S$ is denoted $\boldsymbol{\Phi}(S,G)$ or simply ${\boldsymbol \Phi}$.  For every $a \in {\boldsymbol \Phi}$, there exists a maximal closed connected subgroup $U_a$ of $G$ normalised by $S$ and such that the characters of $S$ intervening in the adjoint representation of $S$ in $Lie(U_a)$ lie in $a$. It is defined on $K$ and is a split unipotent over $K$. It is called the {\it radicial subgroup} of $G$ associated to $a$. Further, we have $U_a \neq \{1 \}$ if and only if $ a \in \boldsymbol \Phi$. 

Let $\Phi$ denote the root system of $G$ with respect to $S$. One knows that an element $a \in {\boldsymbol \Phi}$ contains one or two elements of $\Phi$. Let $V$ denote the dual of the vector subspace generated by $\Phi$ in $E$. Let $f: \Phi \ra \mathbb{R}$ be a (quasi) concave function as in \cite[\S 4.5]{bt}.  Let ${\mathfrak{G}}_f$ denote the associated Bruhat-Tits group scheme, $\overline{\mathfrak{G}}_f^0$ denote the connected component of its closed fiber and $\overline{\mathfrak{S}}$ denote the closed fiber of the split torus. Following \cite{bt}, for this subsection, let $\overline{K}$ denote the residue field which in this work we have denote by $k$ thus far. Let $R$ denote the split unipotent radical of $\overline{\mathfrak{G}}_f^0$. Let $q:\overline{\mathfrak{G}}^0_f \to \overline{\mathfrak{G}}^0_f/R$ denote the canonical quotient map. Let us recall \cite[Corollary 4.6.12]{bt}:
\begin{enumerate}
\item[(i)] The root system of $\overline{\mathfrak{G}}_f^0/R$ with respect to the maximal $\overline{K}$-split torus $q(\overline{\mathfrak{S}})$ (identified with $\overline{\mathfrak{S}}$) is the set $\Phi_f$ of $a \in \Phi$ such that $f(a)+f(-a)=0$.
\item[(ii)] The canonical map of $\text{Aut}(A)$ in $\text{Aut}(V^*)$ induces an isomorphism of $W_f$ on the Weyl group of $\Phi_f$ (operating naturally on the subspace of $V^*$ generated by $\Phi_f$ and trivially on its orthogonal supplement).
\item[(iii)] Let us assume that $\overline{K}$ is separably closed and let $R'$ denote the unipotent radical of $\overline{\mathfrak{G}}^0_f$. The root system of the reductive group $\overline{\mathfrak{G}}^0_f/R'$ (defined on the algebraic closure of $\overline{K}$) is the set $\Phi^{nm}_f$ of non-multipliable elements of $\Phi_f$.
\end{enumerate}

Denoting by $q' :\overline{\mathfrak{G}}^0_f \to \overline{\mathfrak{G}}^0_f/R'$ the canonical quotient map, $q'(\overline{\mathfrak{S}})$ is a $\overline{K}$-maximal torus of $\overline{\mathfrak{G}}^0_f/R'$.

{\em In our situation, $K$ is strictly Henselian, and  $\overline{K}$ is in fact algebraically closed. Whence, $R = R'$, and we do not encounter quasi-reductivity, and all the conditions for the above theorem are fulfilled and usable. In fact, as in our case, since the root system $\Phi$ is {\em reduced}  we have a coincidence $\Phi^{nm}_f = \Phi_f$ }. 


 We now recall some foundational material from loc.cit pages 134-138. Let $\cI(\tt F)$ denote the {\em star of $\tt F$} in the simplicial complex $\cI$, i.e., the set of facets $\tt F'$ of $\cI$ such that $\tt F \subset \bar{\tt F}'$ and ordered by the relation $\tt F' \leq \tt F''$ if $\tt F' \subset \bar{\tt F}''$.

Let $\cG_{\tt F}$, and $\cG_{\tt F'}$ be the associated Bruhat-Tits group schemes (which we assume to be connected) on $\spec \cO$ with generic fibre $\cG_K$. Then, the identity map on $\cG_K$ extends to a morphism:
\begin{equation}
i_{_{\tt F,\tt F'}}:\cG_{\tt F'} \to \cG_{\tt F}.
\end{equation}
Let $\bar{i}_{_{_{{\tt F},{\tt F}'}}}$ denote the induced morphism:
\begin{equation}
\bar{i}_{_{_{{\tt F},{\tt F}'}}}:\cG_{_{{\tt F'},k}}  \to \cG_{_{{\tt F},k}}
\end{equation}
over the residue field $k$ (which for us is algebraically closed). By \cite[4.6.33]{bt}, the map:
\begin{equation}
F' \mapsto \text{Im} \left(\bar{i}_{_{_{{\tt F},{\tt F}'}}} \right)
\end{equation}
is an isomorphism of the ordered subsets of the star $\cI({\tt F})$ of ${\tt F}$ onto the set of parabolic subgroups of the closed fibre $\cG_{_{_{{\tt F},k}}}$. 


\subsection{Generalisation of \S \ref{hecke} to quasi-split case} This last statement implies two facts which are immediate and are basic for our perspective.
Namely, one, the inverse image of $\text{Im}\left(\bar{i}_{_{_{{\tt F},{\tt F}'}}} \right)$ in $\cG_{\tt F}(\cO)$ is equal to $\cG_{\tt F'}(\cO)$.  Two, let 
\begin{equation}
P_{_{_{{\tt F},{\tt F}'}}}:= \text{Im} \left(\bar{i}_{_{_{{\tt F},{\tt F}'}}} \right)
\end{equation} 
be the parabolic subgroup of $\cG_{_{{\tt F},k}}$ determined by ${\tt F}' \in \cI(F)$. In other words, the subspace of sections in $\cG_{\tt F}(\cO)$ whose canonical image under the residue map lies in the parabolic $P_{_{_{{\tt F},{\tt F}'}}}$ is precisely $\cG_{{\tt F}'}(\cO)$. We now apply these group theoretical facts to our parahoric-correspondence setting as follows.

 Let $\tt F_{_j} \subset \bar{\tt F'}$, $j = 1,2$, i.e., the facet $\tt F'$ lies in the star $\cI(\tt F_{_j})$ for $j =1,2$. Then we have a diagram of morphism of affine group schemes over $\spec \cO$:
\begin{equation}\label{heckeatgplevel}
\begin{tikzcd}
	& {\cG_{\tt F'}} \\
	{\cG_{_{\tt F_1}}} && {\cG_{_{\tt F_2}}}
	\arrow["{i_{_{\tt F_1,\tt F'}}}"', from=1-2, to=2-1]
	\arrow["{i_{_{\tt F_2,\tt F'}}}", from=1-2, to=2-3]
\end{tikzcd}\end{equation} 
These morphisms give rise to the Parahoric-correspondence morphisms $\pi$ and $h$ in \eqref{hecke}. Let us describe the fibers of these maps.

Given a pair $(\tt F, \tt F')$ such that $\tt F \subset \bar{\tt F}'$, we get the canonically induced morphism:
\begin{equation}\label{theflagfibres}
\mf_{_{_{{\tt F},{\tt F}'}}}:\cF\ell_{\tt F'} \to \cF\ell_{\tt F}
\end{equation}
which is a \'etale locally trivial fibration with fibre type isomorphic to the homogeneous space $${\cG_{_{{\tt F},k}}/ P_{_{_{{\tt F},{\tt F}'}}}}.$$ This map descends to the morphism $\pi_{_{_{{\tt F},{\tt F}'}}}$ of the corresponding moduli stacks which continue to remain \'etale locally trivial fibrations with same fibres. Thus the Parahoric-correspondence diagram \eqref{heckecorresp} generalises to the quasi-split setup and moreover in the context of \eqref{heckeatgplevel}, we have maps $\pi_{_{_{{\tt F_j},{\tt F}'}}}$,  with fibres $${\cG_{_{{\tt F_j},k}}/ P_{_{_{{\tt F_j},{\tt F}'}}}} \quad \text{for} \quad j = 1,2.$$

Further, the relations \eqref{ModStackasE(G/P)} realising moduli stacks as fibrations obtained from the universal torsors continues to holds.

\subsubsection{The root system} Let $\Phi_{_{\tt F}}^+=\Phi^+ \cap \Phi_{_{\tt F}}$ and $\tt B$ be the basis of $\Phi_{_{\tt F}}$ corresponding to the positive root system of $\Phi_{_{\tt F}}^+$. For $\tt J \subset B$, we denote by $\Phi_{_{\tt J}}$ the set of $a \in \Phi_{_{\tt F}}$ which are the linear combinations of elements of $\tt J$ and $\tt F_J$ the facet of $\cI$ characterised by the relation ${\tt F} \subset \overline{\tt F}_J \subset \overline{C}$ and $$\tt J = \left\{a \in B \mid f_{_{\tt F_J}}(a)=f_{_{\tt F}}(a) \right\}.$$

The map $\tt J \mapsto {\tt F}_J$ is a bijection of $2^{\tt B}$  onto the set of facets $\tt F'$ such that ${\tt F} \subset \overline{\tt F}' \subset \overline{C}$, and we have $\Phi_{_{\tt F_J}}= \Phi_{_{\tt J}}$.  

We have $f_{_{\tt F}} \leq f_{_{{\tt F_J}}}$ and $f_{_{{\tt F_J}}}$ is less than the optimis\'ee of $f^*_{_{\tt F}}$. It follows from \cite[4.6.10]{bt} that the parabolic $P_{_{_{{\tt F},{\tt F_J}}}} = \text{Im} \left(\bar{i}_{_{_{{\tt F},{\tt F_J}}}} \right)$  contains the unipotent radical $R$ of $\overline{\mathfrak{G}}_{\tt F}^0$. 

Since $G$ is simply-connected, the subgroup scheme $\mathfrak T$ of ${\mathfrak{G}}^0_{\tt F}$ extending $T_K$ is {\em smooth}. We let $\overline{\mathfrak T^0}$ denote the connected component of its closed fibre.  

The parabolic $P_{_{_{{\tt F},{\tt F_J}}}}$ also contains $\overline{\mathfrak T^0}$ and, in view of \cite[4.6.5]{bt}, its image in the reductive quotient $\overline{\mathfrak{G}}^0_{\tt F}/R$ is generated by the image of $\overline{\mathfrak T^0}$ and the radicial subgroups of $\overline{\mathfrak{G}}_{\tt F}^0/R$ corresponding to $\Phi^+_{\tt F} \cup \Phi_J$.

\subsubsection{Back to the basic case} After the generalities, we now specialize to our situation. Recall the set $\mu(\alpha)$ \eqref{mu}.

When $\tt F=0$, $\Phi_{_{\tt F}}=\Phi$. We set $\tt F'$ as the open line segment joining $0$ with $\theta_{\alpha}$. Thus, we have ${\tt J} = S - \alpha$ and hence the equation \eqref{hside} continues to hold in the quasi-split case. When $\tt F=\theta_{\alpha}$. Then $f_{_{\tt F}} = f_{_{\theta_\alpha}}$ \eqref{fthetaalpha} and $f_{_{\tt F'}}=f_{_{\tt F_j}}= m_{_{r,\alpha}}$ \eqref{mralpha} above. 

Thus, for a root $r \in \Phi$, we have $$f_{_{\tt F}}(r) < f_{_{\tt F_j}}(r)\iff r \in \mu(\alpha) .$$ Hence, in the quasi-split case, \eqref{piside} continues to hold as well. Consequently, when $c_{\alpha}=1$, we get \eqref{piversush} in the quasi-split case. We isolate the following:
\begin{thm}
The equations \eqref{hside} and \eqref{piversush} continue to hold in the quasi-split case. As a consequence we get \eqref{key1}. As to \eqref{key2}, it clearly holds in this general situation as well since it was a computation at the closed fibre. 
\end{thm}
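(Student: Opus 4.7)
The plan is to harvest the two root-level identities \eqref{hside} and \eqref{piversush} directly from the facet-based Bruhat--Tits description recalled in the preceding subsections, and then to observe that once these are available, the proofs of \eqref{key1} and \eqref{key2} transpose to the quasi-split setting essentially unchanged.

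For \eqref{hside}, I would take ${\tt F} = 0$, so that $\overline{\mathfrak{G}}_{\tt F}^0$ is the hyperspecial reductive quotient with root system $\Phi_{_{\tt F}} = \Phi$, and let ${\tt F}'$ be the open segment joining $0$ to $\theta_{\alpha}$. The characterisation ${\tt J} = \{a \in {\tt B} \mid f_{_{{\tt F}_{\tt J}}}(a) = f_{_{\tt F}}(a)\}$ forces ${\tt J} = S - \alpha$. By the description of $P_{_{_{{\tt F},{\tt F}'}}}$ recalled from \cite[4.6.5, 4.6.33]{bt}, the image of $\bar{i}_{_{{\tt F},{\tt F}'}}$ in the reductive quotient is generated by the image of $\overline{\mathfrak T^0}$ together with the radicial subgroups indexed by $\Phi^+_{_{\tt F}} \cup \Phi_{_{\tt J}} = \Phi^+ \cup \Phi_{_{S-\alpha}}$. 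Consequently the roots appearing in the fibre $\cG_{_{{\tt F},k}}/P_{_{_{{\tt F},{\tt F}'}}}$ of \eqref{theflagfibres} are precisely $\Phi^{-} - \Phi_{_{S-\alpha}}$, which is \eqref{hside}.

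For \eqref{piside}, and thence \eqref{piversush}, I would swap base-points and take ${\tt F} = \theta_{\alpha}$ with the same ${\tt F}'$. Then $f_{_{\tt F}}$ is $f_{\theta_{\alpha}}$ of \eqref{fthetaalpha} while $f_{_{{\tt F}_{\tt J}}}$ coincides with $m_{_{r,\alpha}}$ of \eqref{mralpha}. The roots appearing in $\cG_{_{{\tt F},k}}/P_{_{_{{\tt F},{\tt F}'}}}$ are exactly those $r$ with $f_{_{\tt F}}(r) < f_{_{{\tt F}_{\tt J}}}(r)$, and a term-by-term comparison of \eqref{fthetaalpha} and \eqref{mralpha} shows this set is $\mu(\alpha)$, establishing \eqref{piside}. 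When $c_{\alpha} = 1$, comparison of the two root sets produced above gives \eqref{piversush}.

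With these two identities in place, the argument of \eqref{key1} is purely formal and transfers verbatim: the relative tangent sheaves $\cT_{\pi}$ and $\cT_h$ of the Hecke diagram \eqref{heckecorresp} are identified with the associated bundles indexed by $\mu(\alpha)$ and $\Phi^- - \Phi_{_{S - \alpha}}$ respectively, and \eqref{piversush} supplies the duality $\cT_{\pi}^* \simeq \cT_h$; Marlin's theorem \eqref{cotgt} then yields the cohomological degree shift. As for \eqref{key2}, its content was a computation of $\alpha_{_{max}}^{\vee}$ on $\Phi^{-} - \Phi_{_{S-\alpha}}$ performed entirely on the geometric fibre of $\pi$ over the algebraically closed residue field $k$, and so is unaffected by whether $G$ is split or quasi-split over $K$.

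The main technical point that needs care is the passage from the pseudo-reductive quotient $\overline{\mathfrak{G}}^0_{\tt F}/R$ to a genuine reductive quotient, so that \cite[4.6.33]{bt} applies and the parabolics carry an honest root system matching ours. This is precisely the moment where the hypotheses $K$ strictly Henselian, $k$ algebraically closed, and $\Phi$ reduced (already highlighted after the quotation from \cite[4.6.12]{bt}) are used, giving $R = R'$ and $\Phi_f^{nm} = \Phi_f$; once these are invoked the argument proceeds without obstruction.
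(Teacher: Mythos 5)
Your proposal is correct and takes essentially the same route as the paper: the choices ${\tt F}=0$ and ${\tt F}=\theta_{\alpha}$ with ${\tt F}'$ the open segment joining them, the identification ${\tt J}=S-\alpha$ giving \eqref{hside}, the comparison of $f_{_{\tt F}}=f_{_{\theta_\alpha}}$ with $f_{_{{\tt F}_{\tt J}}}=m_{_{r,\alpha}}$ yielding $\mu(\alpha)$ and hence \eqref{piside} and \eqref{piversush} when $c_{\alpha}=1$, and the observation that \eqref{key2} is purely a closed-fibre computation are exactly the steps in the text. Your explicit flagging of $R=R'$ and $\Phi^{nm}_f=\Phi_f$ under the strictly Henselian and reduced-root-system hypotheses likewise mirrors the paper's preliminary remarks following its quotation of \cite[Corollary 4.6.12]{bt}.
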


\subsection{Computations for the parahoric moduli in the tamely split case}\label{quasisplitcase} Let $\cH$ be a group scheme which is a parahoric group scheme,  of type $G$.  We assume that we have a tamely ramified Galois cover $\phi:Y \to X$ with Galois group $\Gamma$, and a split semisimple group scheme on $Y$ such that $$\cH := \phi_{_{*}}^{^{\Gamma}} \left(G_Y \right).$$ Thus, $\cH_{_K}$ is quasi-split and we can apply the last results.

Let $\cM^{^s}_{_{\cH}}$ be the stack of {\em stable} $\cH$-torsors on $X$, stability being defined in an equivariant fashion following \cite{bs}. Let $\hfr := \text{Lie}(\cH)$ and let $E_{_{\cH}}$ be the universal torsor on $X \times \cM^{^s}_{_{\cH}}$. 
\begin{defi}
Let $\cT_{_{\cH}} := p_{_*} \left(E_{_{\cH}}(\hfr) \right)$.
\end{defi}
We now indicate why the Theorems (\ref{main1}) and (\ref{main2}) hold for the stack $\cM^{^s}_{_{\cH}}$ with coefficients in $\cT_{_{\cH}}$ and $\Omega_{_{\cH}} := \cT_{_{\cH}}^*$. We first choose a base point $x \in X$ away from the ramification locus of the group scheme $\cH$. The closed fibre of $\cH$ at $x$ being $G$, we set up the Parahoric-correspondence to get a diagram \eqref{hecke} with the modification at the point $x$ alone. All the arguments now work with $E(\gfr)$ being replaced by $E_{_{\cH}}(\hfr)$.

\bibliographystyle{alpha}
\bibliography{DefBunG}

\end{document}